\documentclass[11pt, letterpaper,reqno]{amsart}

\bibliographystyle{apalike}
\usepackage{graphicx}
\usepackage[linktocpage=true,colorlinks,citecolor=blue,linkcolor=blue,urlcolor=black]{hyperref}
\usepackage{fullpage}
\usepackage{amssymb}
\usepackage{cite}
\usepackage{amsmath}
\usepackage{latexsym}
\usepackage{amscd}
\usepackage{tikz}
\usepackage{amsthm}
\usepackage{mathrsfs}
\usepackage{url}
\usepackage[utf8]{inputenc}
\usepackage[english]{babel}
\usepackage{amsfonts}
\usepackage{todonotes}
\usepackage{mathtools}
\usepackage{verbatim}
\usepackage[justification=centering]{caption}

\vfuzz2pt 
\hfuzz2pt 

\numberwithin{equation}{section}
\setcounter{section}{0}


\theoremstyle{plain}
\newtheorem{theorem}{Theorem}

\newtheorem{lemma}[theorem]{Lemma}

\newtheorem*{theorem*}{Theorem}

\newtheorem{corollary}[theorem]{Corollary}

\theoremstyle{remark}

\theoremstyle{definition}

\theoremstyle{remark}

\numberwithin{equation}{section}

\begin{document}
\title{On connected components with many edges}
\author{Sammy Luo}

\thanks{Department of Mathematics, Stanford University, Stanford, CA 94305, USA. \\
\indent Supported by NSF GRFP Grant DGE-1656518. Email: {\tt sammyluo@stanford.edu}.}
	\maketitle

\begin{abstract}
We prove that if $H$ is a subgraph of a complete multipartite graph $G$, then $H$ contains a connected component $H'$ satisfying $|E(H')||E(G)|\geq |E(H)|^2$. We use this to prove that every three-coloring of the edges of a complete graph contains a monochromatic connected subgraph with at least $1/6$ of the edges. We further show that such a coloring has a monochromatic circuit with a fraction $1/6-o(1)$ of the edges. This verifies a conjecture of Conlon and Tyomkyn. Moreover, for general $k$, we show that every $k$-coloring of the edges of $K_n$ contains a monochromatic connected subgraph with at least $\frac{1}{k^2-k+\frac{5}{4}}\binom{n}{2}$ edges.
\end{abstract}

\section{Introduction}

A classical observation of Erd\H{o}s and Rado is that in any two-coloring of the edges of the complete graph $K_n$, one of the color classes forms a connected graph. In \cite{gyarfas1977partition}, Gyárfás proves the following generalization of this observation: For any $k\geq 2$, in every $k$-coloring of the edges of $K_n$, there is a monochromatic connected component with at least $n/(k-1)$ vertices. This observation has since been extended in numerous ways, such as by replacing $K_n$ with a graph of high minimum degree \cite{gyarfas2017mindeg} or with a nearly-complete bipartite graph \cite{deblasio2020bipartite}, or adding a constraint on the diameter of the large monochromatic component \cite{Ruszinko2012diameter}. 
See \cite{Gyarfas2011Survey} for a survey of earlier work on the subject.

The arguments used in this subject tend to focus on sparse spanning structures like double stars. As such, there is a surprising lack of progress on the corresponding question about edges in a monochromatic connected component. That is, what is the largest value of $M=M(n,k)$ 
such that every $k$-edge-coloring of $K_n$ has a monochromatic connected component with at least $M$ edges?

This question was raised by Conlon and Tyomkyn in \cite{conlon2021ramsey}, in the context of determining the multicolor Ramsey numbers for trails (see Section~\ref{subsec:trails} for the relevant definitions). 
After showing that $M(n,2)=\frac{2}{9}n^2+o(n^2)$, they sketch a simple argument that shows $M(n,k)\geq \frac{1}{16k^2}n^2+O(n)$ for all $k$; with a slightly more careful analysis, their argument in fact yields $M(n,k)\geq \frac{1}{4k^2}n^2+O(n)$. In the other direction, they examine a construction of Gyárfás in \cite{Gyarfas2011Survey} to show that $M(n,k)\leq \frac{1}{2k(k-1)}n^2+O(n)$ for infinitely many values of $k$ (specifically, when $k-1$ is a prime power), conjecturing that this upper bound is tight in the case $k=3$.

In this note, we improve the general lower bound on $M(n,k)$, as well as a corresponding lower bound for the trail Ramsey problem, bringing it to asymptotically within a factor $1-O\left(\frac{1}{k^2}\right)$ of the upper bound for infinitely many values of $k$.
\begin{theorem}
\label{thm:colorlb}
For any $k\geq 2$, in every $k$-coloring of the edges of $K_n$, there is a monochromatic connected component with at least $M(n,k)\geq \frac{1}{k^2-k+\frac{5}{4}}\binom{n}{2}$ edges.
\end{theorem}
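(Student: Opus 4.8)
The plan is to apply the abstract edge-extremal principle stated in the abstract — if $H \subseteq G$ with $G$ complete multipartite, then some component $H'$ of $H$ has $|E(H')||E(G)| \ge |E(H)|^2$ — in an iterative fashion. Start with $G_0 = K_n$, and let the color classes be $H_1,\dots,H_k$. The color class with the most edges has at least $\frac1k\binom n2$ edges; but this alone only gives $M \ge \frac{1}{k^2}\binom n2$ after one application of the principle, matching the weaker bounds already known. The improvement must come from iterating: after one application we obtain a monochromatic component $H'$, say in color $1$, spanning a vertex subset $S$; the edges of $K_n$ inside $S$ and the edges of $K_n$ with at least one endpoint in $S$ that are \emph{not} colored $1$ form a structured subgraph. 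The key observation is that the vertex set $V \setminus S$ together with the cross-edges between $S$ and $V\setminus S$, minus color $1$, lives inside a complete multipartite graph (with $S$ collapsed to appropriate parts, or treated as one side), so the principle can be reapplied to the remaining $k-1$ colors on a complete multipartite host, which is exactly the setting the principle is designed for.

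Concretely, I would set up a recursion: let $f(k)$ be the best constant such that in any $k$-coloring of any complete multipartite graph $G$ there is a monochromatic component with $\ge |E(G)|/f(k)$ edges. The base case is $f(1)=1$ (trivially the whole graph is connected... more precisely, a complete multipartite graph with one part is empty, so one must be slightly careful; the genuine base case is $f(2) \le$ something like $9/4$ coming from the two-color analysis). For the inductive step, given a $k$-colored complete multipartite $G$, either the largest color class is ``large'' — say $\ge |E(G)|/c$ for a threshold $c$ to be optimized — in which case the extremal principle immediately yields a component with $\ge |E(G)|^2/(c\,|E(G)|) \cdot$ (a correction, since the component lies in a subgraph) edges; or every color class is ``small'', in which case we can delete one color entirely, losing only $|E(G)|/c$ edges, and the remaining $k-1$ colors on a subgraph of a complete multipartite graph give, by induction, a component of size $\ge (1-1/c)|E(G)|/f(k-1)$. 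Balancing these two cases — choosing $c$ so that $c$ and $f(k-1)/(1-1/c)$ are comparable — yields a recursion of the shape $f(k) \approx f(k-1) + (\text{something like } 2k-2)$, which telescopes to $f(k) = k^2 - k + 5/4$ with the correct constant $5/4$ pinned down by the two-color base case. Theorem 1 then follows by taking $G = K_n$, noting $K_n$ is trivially complete multipartite, so $M(n,k) \ge \binom n2 / f(k)$.

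The main obstacle I anticipate is making the ``reapply the principle to the remaining colors'' step rigorous: after extracting the first monochromatic component on $S$, the leftover graph on which the other $k-1$ colors act is not literally $K_n$ minus edges, and one has to argue it sits inside a complete multipartite graph so that both the extremal principle and the inductive hypothesis $f(k-1)$ genuinely apply. This is where the hypothesis ``$G$ is complete multipartite'' (rather than just ``$G=K_n$'') in the abstract principle is essential and must be threaded carefully through the induction. A secondary technical point is bookkeeping the $O(n)$ versus $\binom n2$ normalization and checking that the component found really is \emph{within} $H$ (so that its edge count is counted correctly against $|E(H)|^2$), and that the extremal bound $|E(H')| \ge |E(H)|^2/|E(G)|$ is being applied with the right $G$ at each stage — in particular, replacing $G$ by the smaller multipartite host at each step is what makes the constant improve rather than stagnate at $1/k^2$.
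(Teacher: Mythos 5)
Your proposed recursion does not match the paper's argument, and it has a fatal gap. The inductive step claims that after deleting one small color class from a complete multipartite graph $G$, the remaining $(k-1)$-colored graph can be fed into the inductive hypothesis $f(k-1)$. But the inductive hypothesis, as you set it up, applies only to complete multipartite graphs, and deleting an arbitrary color class from a complete multipartite graph does not leave a complete multipartite graph --- you flag this yourself as ``the main obstacle,'' but the proposal never resolves it, and in fact it cannot be resolved within this framework. You can see the unsoundness numerically: your balancing gives $f(k) = c^2$ where $c^2 - c = f(k-1)$, and starting from the tight base $f(2) = 9/4$ this produces $f(3) = \bigl(\tfrac{1+\sqrt{10}}{2}\bigr)^2 \approx 4.33$, i.e.\ a claimed monochromatic component of size $\gtrsim 0.23\binom{n}{2}$. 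But the paper proves (Theorem~\ref{thm:color3}) that the extremal answer for $k=3$ is exactly $\tfrac{1}{6}\binom{n}{2} \approx 0.167\binom{n}{2}$, so your recursion would give something strictly stronger than what is true.

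The paper's actual proof avoids any induction on $k$ and is driven by a different idea. Let red be the majority color, with red components $R_1,\dots,R_m$. Rather than deleting the red \emph{edges}, the paper deletes every edge \emph{inside} a red component's vertex set, producing the complete $m$-partite graph $G$ with parts $V(R_1),\dots,V(R_m)$. This $G$ is a genuine complete multipartite graph, contains no red edges at all, and still contains every non-red edge; so Theorem~\ref{thm:main} applies cleanly to whichever of the remaining $k-1$ colors is densest, giving a component of size $\ge \tfrac{1}{(k-1)^2}|E(G)|$. The whole game is then to lower-bound $|E(G)|$, i.e.\ to upper-bound $\sum_i \binom{|V(R_i)|}{2}$. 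This is done by applying Theorem~\ref{thm:main} repeatedly to the complete graph on $V \setminus (V(R_1) \cup \cdots \cup V(R_j))$ for each prefix $j$ --- still a complete graph, so the theorem applies --- to derive constraints on the partial sums $\sum_{i\le j}|V(R_i)|$, and then solving the resulting convex optimization by a smoothing argument (Lemma~\ref{lem:smoothing}). Your instinct that ``replacing $G$ by a smaller multipartite host is what makes the constant improve'' is correct in spirit, but the mechanism is these prefix applications to shrinking \emph{complete} graphs plus the deletion-of-component-interiors trick, not an induction on the number of colors.
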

By building on the overarching ideas in \cite{Gyarfas2011Survey} and introducing some key new insights, we manage to strengthen this lower bound in the case $k=3$ to prove the tight lower bound conjectured by Conlon and Tyomkyn.

\begin{theorem}
\label{thm:color3}
In every $3$-coloring of the edges of $K_n$, there is a monochromatic component containing at least a sixth of the edges. That is, $M(n,3)\geq \lceil\frac{1}{6}\binom{n}{2}\rceil$. Moreover, for $n$ sufficiently large, this bound is sharp.
\end{theorem}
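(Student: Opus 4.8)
I would prove this in two parts: a colouring realising the bound, and an argument that no colouring beats it.

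\textbf{Sharpness.} For the upper bound, iterate Gyárfás's affine‑plane‑of‑order‑two colouring. Partition $V(K_n)$ into four parts $B_u$ indexed by $u\in\mathbb F_2^2$, with sizes as equal as divisibility allows; colour an edge between $B_u$ and $B_v$ ($u\ne v$) by the nonzero vector $u-v$, the three nonzero vectors of $\mathbb F_2^2$ playing the role of the three colours; and colour the edges inside each $B_u$ recursively by the same rule. Each colour class then has exactly two components, each a union of two parts; since translating by a nonzero vector of $\mathbb F_2^2$ exchanges the two lines of a parallel class, those two components are isomorphic as coloured graphs and hence have equally many edges. If $c(m)$ is the number of edges of one colour on $m$ vertices, then $c(m)=\tfrac{m^2}{8}+4c(\lceil m/4\rceil)+O(m)$, so $c(m)=\tfrac13\binom m2+O(m)$, and the largest monochromatic component has $\tfrac{n^2}{16}+2c(\lceil n/4\rceil)=\tfrac16\binom n2+O(n)$ edges. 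Choosing the part sizes at every level so that the error term stays non‑positive — a routine but finicky optimisation — produces a colouring whose largest monochromatic component has at most $\lceil\tfrac16\binom n2\rceil$ edges for all large $n$; combined with the lower bound this gives sharpness.

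\textbf{Lower bound.} Suppose toward a contradiction that every monochromatic component has fewer than $\tfrac16\binom n2$ edges, and write $N=\binom n2$. The multipartite lemma with $G=K_n$ (all parts singletons), applied to each colour class, gives $e_i<N/\sqrt6$ for every colour $i$, so since $\sum_i e_i=N$ every colour has between $N/3$ and $N/\sqrt6$ edges. Fix the colour $i$ with $e_i$ largest; it is disconnected, so its components $A_1,\dots,A_p$ ($p\ge2$) span a complete multipartite graph $G^{(i)}$ with $N-a$ edges, where $a=\sum_\ell\binom{|A_\ell|}{2}\ge e_i$, all of colour $j$ or $k$. Applying the multipartite lemma to colours $j$ and $k$ inside $G^{(i)}$ gives a colour‑$j$ component and a colour‑$k$ component whose portions inside $G^{(i)}$ have at least $(e^*_j)^2/(N-a)$ and $(e^*_k)^2/(N-a)$ edges, with $e^*_j+e^*_k=N-a$; convexity makes one of them at least $(N-a)/4$. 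As $a>\tfrac13N$ here, this is just below $\tfrac16N$, and the deficit must be made up by the monochromatic edges lying \emph{inside} the parts $A_\ell$: every colour‑$j$ edge with both ends in the vertex set $W$ of that colour‑$j$ component lies in it, adding $\sum_\ell\lvert E_j(A_\ell\cap W)\rvert$. The plan is to show that, optimising over the three colours and the two decomposition directions, the between‑part term and the interior term always add to at least $\tfrac16N$; a key auxiliary step is an iteration — contract the colour‑$i$ components, look at one of the remaining colours among them, and observe that between two groups that colour fails to connect, \emph{all} edges receive the third colour and thus form a connected complete multipartite subgraph of that colour, hence a monochromatic component equal in size to the resulting ``defect'', which is therefore $<\tfrac16N$ — whose repeated application drives the part structure coarse enough that the interior mass can no longer be hidden.

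\textbf{Main obstacle.} The genuine difficulty is that the interior edges may be placed adversarially: a colour‑$j$ edge inside $A_\ell$ need not meet $W$, and the colour‑$j$ edges entering $A_\ell$ from different parts may sit in different colour‑$j$ components of $A_\ell$, so one cannot simply pour all of $\sum_\ell\lvert E_j(A_\ell)\rvert$ into one component. Controlling which monochromatic component the interior edges fall into, and verifying that the multipartite lemma's between‑part guarantee plus the unavoidable interior edges always sum to at least $\tfrac16\binom n2$ — so that the extremal colouring, where these two quantities are $N/8$ and $N/24$, really is worst possible — is the heart of the proof, and is where the arguments of \cite{Gyarfas2011Survey} must be pushed considerably further.
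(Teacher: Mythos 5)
Both halves of your proposal take a genuinely different route from the paper, but both halves have real gaps that you yourself flag.

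For the \textbf{lower bound}, the claim ``every colour has between $N/3$ and $N/\sqrt6$ edges'' is not justified: from $e_i<N/\sqrt6$ for all $i$ and $\sum_i e_i = N$ you only get $e_i > N(1-2/\sqrt6)\approx 0.18N$ for each colour, and only the \emph{largest} colour is forced above $N/3$. More seriously, the core of your argument --- that after contracting the colour-$i$ components, the multipartite lemma applied to colours $j,k$ gives a component of size $\geq (N-a)/4$, and the remaining deficit (which can be anywhere from just-short-of-$N/6$ down to $0$, since $a$ can greatly exceed $N/3$) is recovered by interior edges --- is explicitly left unresolved; you call it ``the heart of the proof.'' There is no mechanism presented for forcing the interior colour-$j$ edges in $A_\ell$ into the specific colour-$j$ component $W$ that the multipartite lemma produces. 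The paper avoids this entirely with a different decomposition: instead of contracting all components of the densest colour, it picks the single red component $H'$ of maximum \emph{average degree}, so $|V(H')|\in[n/3,n/2]$, and then works in the bipartite graph between $V(H')$ and its complement. A short case analysis on how a green component meets the two sides forces exactly two components of each colour, after which the bound is immediate by pigeonhole --- no accounting of ``interior'' edges is required.

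For the \textbf{sharpness}, the recursive affine-plane colouring does give $\frac{1}{6}\binom n2 + O(n)$, but the statement you are proving requires every monochromatic component to have at most exactly $\lceil\frac16\binom n2\rceil$ edges. You defer this to ``a routine but finicky optimisation,'' but controlling errors across all levels of the recursion is precisely where the difficulty lies, and the recursion makes the accounting harder, not easier. The paper's construction is not recursive: a single partition into four near-equal parts $V_1,\dots,V_4$ with the Gy\'arf\'as pattern between parts, and then the \emph{internal} edges of each $V_i$ are allocated directly --- enough green in $V_1,V_2$ and blue in $V_3,V_4$ to make every green and blue component have exactly $\lceil\frac16\binom n2\rceil$ edges, with the rest red. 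An exchange argument (among such ``nice'' colourings, pick one minimising the larger red component) then shows the red components do not overshoot. This one-level construction is what makes the exact ceiling achievable, and none of it appears in your proposal.
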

While equality holds in this bound for sufficiently large $n$, there are, as we will see from the proof, small values of $n$ for which $M(n,3)>\lceil\frac{1}{6}\binom{n}{2}\rceil$. In particular, equality holds for all $n\geq 18$, but not for $n=17$.

The key result we use is a new inequality that may be of independent interest. Given a subgraph $H$ of a complete multipartite graph $G$, it relates the edge counts of $G$ and $H$ to the largest edge count of a connected component $H'$ of $H$.
\begin{theorem}
\label{thm:main}
Let $G$ be a complete $r$-partite graph for some $r\geq 2$, and let $H$ be a subgraph of $G$. Then $H$ contains a connected component $H'$ satisfying
\[
|E(H')|\geq \frac{|E(H)|^2}{|E(G)|}.
\]
\end{theorem}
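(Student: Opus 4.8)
The plan is to deduce the theorem from the stronger-looking estimate
\[
|E(G)|\ \ge\ \Bigl(\sum_{t}\sqrt{|E(C_t)|}\,\Bigr)^{2},
\]
where $C_1,\dots,C_m$ denote the connected components of $H$. Granting this, the theorem follows in one line: taking $H'$ to be the component maximising $|E(C_t)|$ and writing $M=|E(H')|$, each term obeys $\sqrt{|E(C_t)|}\ge |E(C_t)|/\sqrt M$, so $\sum_t\sqrt{|E(C_t)|}\ge |E(H)|/\sqrt M$, whence $|E(G)|\ge |E(H)|^{2}/M$, i.e.\ $|E(H')|\ge |E(H)|^{2}/|E(G)|$.

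To prove the displayed estimate I would first reduce to the case that $H$ spans $V(G)$, adjoining the missing vertices as isolated components (this changes neither $|E(H)|$ nor the list of components carrying edges). Write $V_1,\dots,V_r$ for the parts of $G$, set $b_t=|V(C_t)|$ and $x_{t,i}=|V(C_t)\cap V_i|$, so $b_t=\sum_i x_{t,i}$. Classifying each edge of $G$ according to whether its two endpoints lie in the same component gives
\[
|E(G)|\;=\;\sum_{t}f_t\;+\;\sum_{s<t}g_{st},\qquad f_t=\tfrac12\bigl(b_t^{2}-\textstyle\sum_i x_{t,i}^{2}\bigr),\quad g_{st}=b_sb_t-\textstyle\sum_i x_{s,i}x_{t,i},
\]
where $f_t$ counts the $G$-edges inside $V(C_t)$ and $g_{st}$ the $G$-edges between $V(C_s)$ and $V(C_t)$. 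Since the edges of $C_t$ sit inside $V(C_t)$, we have $f_t\ge |E(C_t)|=:e_t$, so it suffices to prove the pairwise bound $g_{st}\ge 2\sqrt{e_se_t}$; summing it then yields $|E(G)|\ge\sum_t e_t+2\sum_{s<t}\sqrt{e_se_t}=(\sum_t\sqrt{e_t})^{2}$.

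For the pairwise bound, put $\sigma_t=\bigl(\sum_i x_{t,i}^{2}\bigr)^{1/2}$. Cauchy--Schwarz applied to $(x_{s,i})_i$ and $(x_{t,i})_i$ gives $\sum_i x_{s,i}x_{t,i}\le\sigma_s\sigma_t$, hence $g_{st}\ge b_sb_t-\sigma_s\sigma_t$. On the other hand $2e_t\le 2f_t=b_t^{2}-\sigma_t^{2}=(b_t-\sigma_t)(b_t+\sigma_t)$, so by AM--GM
\[
2\sqrt{e_se_t}\ \le\ \sqrt{(b_s-\sigma_s)(b_s+\sigma_s)(b_t-\sigma_t)(b_t+\sigma_t)}\ \le\ \tfrac12\bigl[(b_s-\sigma_s)(b_t+\sigma_t)+(b_s+\sigma_s)(b_t-\sigma_t)\bigr]=b_sb_t-\sigma_s\sigma_t\ \le\ g_{st},
\]
as required.

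The edge-classification identity and the final normalisation are routine. The crux — and the step I expect to be the main obstacle to locate — is recognising that one should aim for the sum-of-square-roots inequality $|E(G)|\ge(\sum_t\sqrt{|E(C_t)|})^{2}$ and then isolate the clean estimate $g_{st}\ge 2\sqrt{e_se_t}$ for the number of $G$-edges crossing between two components: simply relaxing $e_t\le f_t$ and discarding the cross terms loses too much, since it ignores how the crossing edges $g_{st}$ grow when components are spread out over many parts. It is worth checking the equality cases along the way (for instance $G=K_{m,m}$ with $H$ a perfect matching, where every inequality above is tight), which is what pins down the exponent and constant in the statement.
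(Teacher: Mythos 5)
Your argument is correct, and it reaches the theorem by a genuinely different route from the paper's, even though the key analytic ingredient is essentially shared. The paper proves exactly your pairwise bound as Corollary~\ref{cor:weightcs}: for $S,T\subseteq V(G)$, $e(S,T)^2\geq 4|E(G[S])||E(G[T])|$, which with $S=V(C_s)$, $T=V(C_t)$ is your $g_{st}\geq 2\sqrt{e_se_t}$ (and the proof there is the same Cauchy--Schwarz plus AM--GM computation, packaged as Lemma~\ref{lem:weightcs}). Where you diverge is in how this is deployed. The paper never aggregates over pairs of components; instead it introduces the weighted vertex count $f(S)=\tfrac12 e_G(S,V)$, notes $\sum_\ell f(V(H_\ell))=|E(G)|$, uses the generalized mediant inequality to select a component $H'$ with $|E(H')|/f(V(H'))\geq |E(H)|/|E(G)|$, and then applies the Cauchy--Schwarz corollary a single time with $S=V(H')$, $T=V$ to get $f(V(H'))^2\geq |E(H')||E(G)|$, closing the circle. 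You instead sum the pairwise estimates over all ordered pairs of components to get the clean intermediate inequality $|E(G)|\geq\bigl(\sum_t\sqrt{|E(C_t)|}\bigr)^2$, and then a one-line averaging step yields the theorem. Your intermediate inequality is in fact strictly stronger than the stated theorem (it gives more when the component sizes are uneven), and it is worth noting that the paper's Corollary~\ref{cor:weightcs} already implies it directly, via $f(V(H_\ell))\geq\sqrt{|E(H_\ell)||E(G)|}$ summed over $\ell$ --- so the mediant step the paper uses is, in retrospect, avoidable. Both proofs are elementary and of comparable length; yours has the advantage of surfacing a tidier and slightly stronger statement, while the paper's formulation via $f(S)$ mirrors the average-degree argument it gives first for $G=K_n$, which makes the generalization more transparent.
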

In effect, this result settles the density analogue of the coloring question of determining $M(n,k)$. Instead of partitioning the edges of a graph $G$ into color classes, we are fixing a subgraph $H$ of $G$ with a given fraction $\delta=\frac{|E(H)|}{|E(G)|}$ of its edges, and asking about the component of $H$ with the most edges. Theorem~\ref{thm:main} can then be restated as: if $|E(H)| = \delta |E(G)|$, then $H$ contains a connected component $H'$ with $|E(H')|\geq \delta^2 |E(G)|$. Equality can be attained asymptotically when $\delta=\frac{1}{k}$ for any positive integer $k\geq 2$: Let $V(G)=V_1\cup\cdots\cup V_r$ be an $r$-partition of $V(G)$, split each $V_i$ into $k$ (roughly) equally sized vertex sets $\{V_{i,j}\}_{j=1}^k$, and for $1\leq j\leq k$, let $H_j$ be the subgraph of $G$ induced on $\bigcup_{i=1}^r V_{i,j}$. Then indeed, $H=\bigcup_{j=1}^k H_j$ is a subgraph of $G$ whose components have edge counts
\[
|E(H_j)|=\frac{1}{k^2}|E(G)|+O(n) = \frac{|E(H)|^2}{|E(G)|} + O(n).
\]

The simplest case of Theorem~\ref{thm:main}, when $G=K_n$, already immediately implies an improvement over previously known lower bounds on $M(n,k)$.

\begin{corollary}\label{cor:weaklb}
In every $k$-coloring of the edges of $K_n$, there is a monochromatic connected component with at least $\frac{1}{k^2}\binom{n}{2}$ edges.
\end{corollary}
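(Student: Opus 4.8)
The plan is to derive this immediately from Theorem~\ref{thm:main}, applied to the largest color class. First I would note that $K_n$ is a complete $n$-partite graph (each part a single vertex), so Theorem~\ref{thm:main} applies with $G=K_n$ and $r=n$; for $n\le 1$ the statement is vacuous, so assume $n\ge 2$.

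Given a $k$-coloring of $E(K_n)$, the edge set is partitioned into $k$ color classes, so by averaging some color class $H$ satisfies $|E(H)|\ge \frac{1}{k}|E(K_n)|=\frac{1}{k}\binom{n}{2}$. Regarding $H$ as a subgraph of $G=K_n$ and applying Theorem~\ref{thm:main}, $H$ has a connected component $H'$ with
\[
|E(H')|\ge \frac{|E(H)|^2}{|E(G)|}\ge \frac{\bigl(\tfrac{1}{k}\binom{n}{2}\bigr)^2}{\binom{n}{2}}=\frac{1}{k^2}\binom{n}{2},
\]
where the second inequality uses the monotonicity of $x\mapsto x^2$ on the nonnegative reals together with the bound on $|E(H)|$. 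Since $H'$ is a connected component of the spanning subgraph consisting of all edges of one fixed color, it is precisely a monochromatic connected component of the coloring, which is what is claimed.

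The only points needing verification are the trivial observations that $K_n$ is complete multipartite and that $x\mapsto x^2$ is increasing (so the lower bound on $|E(H)|$ may be substituted into the inequality of Theorem~\ref{thm:main}). Consequently there is no real obstacle at this stage: all the difficulty is concentrated in Theorem~\ref{thm:main} itself, whose proof I expect to be the genuinely hard part. It is worth recording, though, that even this soft consequence already improves the previously known bound $M(n,k)\ge\frac{1}{4k^2}n^2+O(n)$ by roughly a factor of two, and that the construction with the $V_{i,j}$ described after Theorem~\ref{thm:main} shows the constant $\frac{1}{k^2}$ here cannot be improved by any argument that only bounds the densest color class.
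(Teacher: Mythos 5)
Your proof is correct and follows exactly the paper's argument: take the densest color class $H$ (which has at least $\frac{1}{k}\binom{n}{2}$ edges by pigeonhole) and apply Theorem~\ref{thm:main} with $G = K_n$. The extra remarks about $K_n$ being complete multipartite and monotonicity of $x\mapsto x^2$ are harmless elaborations of the same one-line deduction.
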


\begin{proof}
In a $k$-coloring of the edges of $K_n$, one of the color classes has at least $\frac{1}{k}\binom{n}{2}$ edges. Taking $H$ to be this color class in Theorem~\ref{thm:main} with $G=K_n$ yields Corollary~\ref{cor:weaklb}.
\end{proof}

The proof of Theorem~\ref{thm:color3} requires a more detailed argument, but similarly follows from this one case of Theorem~\ref{thm:main}, while the proof of Theorem~\ref{thm:colorlb} requires the use of Theorem~\ref{thm:main} in full generality.

In the next section, we give an elementary proof of Theorem~\ref{thm:main}. We then study the coloring version of the problem, as well as the corresponding trail Ramsey problem, in Section~\ref{sec:color}.

\section{\texorpdfstring{Proof of Theorem~\ref{thm:main}}{Proof of Theorem 3}}
\label{sec:mainpf}

In the case $G=K_n$, the proof of Theorem~\ref{thm:main} is very simple, but nevertheless includes a key insight: Instead of taking a component that maximizes the number of edges right away, we consider a component $H'$ with maximum \textbf{average degree} $\bar{d}(H')=\frac{|E(H')|}{\frac{1}{2}|V(H')|}$. Two observations are crucial here: First, by the so-called \emph{generalized mediant inequality}, this highest average degree must be at least the average degree of the whole graph $H$, since 
\[
\frac{|E(H)|}{\frac{1}{2}|V(H)|}=\frac{\sum|E(H_i)|}{\frac{1}{2}\sum|V(H_i)|},
\]
where the sums are over connected components $H_i$ of $H$, and the right hand side is a generalized mediant of the average degrees of the individual components. 
Second, the number of vertices of $H'$ is at least one more than its maximum degree, so $|V(H')|\geq \bar{d}(H')+1$. Letting $\delta = \frac{|E(H)|}{|E(G)|}$, we then obtain the bound
\begin{align*}
    |E(H')|&\geq \frac{1}{2}(\bar{d}(H'))(\bar{d}(H')+1) = \binom{\bar{d}(H')+1}{2}\\
    &\geq \binom{\bar{d}(H)+1}{2} = \binom{\frac{2|E(H)|}{n}+1}{2}  \\
    &= \binom{\delta(n-1)+1}{2} \geq \delta^2 \binom{n}{2} \\
    &= \delta^2 |E(G)| = \frac{|E(H)|^2}{|E(G)|},
\end{align*}
as desired.

The general case will use both of these observations in a modified setting. Instead of the average degree, we will work with a slightly different quantity whose denominator is a weighted vertex count; we will then, perhaps counterintuitively, lower bound this weighted vertex count by the modified analogue of the average degree, in order to obtain the bound we seek. The core of our proof is the following general inequality.
\begin{lemma}
\label{lem:weightcs}
If $a_1,\dots,a_r$ and $b_1,\dots,b_r$ are nonnegative real numbers, then
\[
\left(\left(\sum_{i=1}^r a_i\right)\left(\sum_{i=1}^r b_i\right) - \sum_{i=1}^r a_i b_i \right)^2 \geq \left(\left(\sum_{i=1}^r a_i\right)^2 - \sum_{i=1}^r a_i^2 \right)\left(\left(\sum_{i=1}^r b_i\right)^2 - \sum_{i=1}^r b_i^2 \right).
\]
\end{lemma}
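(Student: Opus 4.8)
The plan is to recognize the quantities $\left(\sum a_i\right)\left(\sum b_i\right) - \sum a_i b_i$, $\left(\sum a_i\right)^2 - \sum a_i^2$, and $\left(\sum b_i\right)^2 - \sum b_i^2$ as symmetric sums over unordered pairs $\{i,j\}$ with $i \neq j$. Indeed, $\left(\sum a_i\right)\left(\sum b_i\right) - \sum a_i b_i = \sum_{i \neq j} a_i b_j = \sum_{\{i,j\}}(a_i b_j + a_j b_i)$, while $\left(\sum a_i\right)^2 - \sum a_i^2 = 2\sum_{\{i,j\}} a_i a_j$ and similarly for the $b$'s. So the claimed inequality is
\[
\left(\sum_{\{i,j\}} (a_i b_j + a_j b_i)\right)^2 \geq \left(2\sum_{\{i,j\}} a_i a_j\right)\left(2\sum_{\{i,j\}} b_i b_j\right).
\]

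My first step would be to try to produce this via a single application of the Cauchy--Schwarz inequality, indexing over the set $P$ of unordered pairs. The natural attempt is to write, for each pair $p = \{i,j\}$, a ``vector'' whose squared length is $2a_i a_j$ and another whose squared length is $2 b_i b_j$, chosen so that their inner product is exactly $a_i b_j + a_j b_i$. This works pointwise: take the two-dimensional vectors $u_p = (\sqrt{a_i}\,\sqrt{b_j}, \sqrt{a_j}\,\sqrt{b_i})$-type assignments — more precisely, for the pair $p$ set $x_p = (\sqrt{a_i b_j}, \sqrt{a_j b_i})$ viewed appropriately. Then $\langle x_p, x_p\rangle$-style manipulations need care; the cleaner route is: for each pair $p=\{i,j\}$, note by AM--GM-free algebra that $(a_i b_j + a_j b_i)^2 \leq (2a_i a_j + \text{?})\cdots$ does \emph{not} hold pairwise (e.g. $a_i b_j$ alone), so a purely pairwise bound fails and we genuinely need Cauchy--Schwarz across pairs. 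So instead I would apply Cauchy--Schwarz directly: $\left(\sum_p (a_i b_j + a_j b_i)\right)^2 \leq \left(\sum_p \frac{(a_i b_j + a_j b_i)^2}{w_p}\right)\left(\sum_p w_p\right)$ for a weight $w_p$ to be chosen, and aim to pick $w_p$ making the right side collapse to $4(\sum a_i a_j)(\sum b_i b_j)$ — but this is the wrong direction of the inequality.

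The correct approach is the reverse: treat the $2$-dimensional vector picture. For each pair $p = \{i,j\}$, define $v_p = (\sqrt{a_i b_j},\ \sqrt{a_j b_i}) \in \mathbb{R}^2$ and $w_p = (\sqrt{a_j b_i},\ \sqrt{a_i b_j})$, so that $\langle v_p, w_p\rangle = 2\sqrt{a_i a_j b_i b_j}$, $|v_p|^2 = |w_p|^2 = a_i b_j + a_j b_i$. Summing $v := \sum_p v_p$ and $w := \sum_p w_p$, Cauchy--Schwarz gives $\langle v, w\rangle^2 \leq |v|^2 |w|^2$. The left side, after expanding, should reproduce $4(\sum_p \sqrt{a_i a_j})^2 (\sum \cdots)$-type terms; the key identity to verify is that $|v|^2 = |w|^2 = \sum_p (a_i b_j + a_j b_i) = \left(\sum a_i\right)\left(\sum b_i\right) - \sum a_i b_i$, and that $\langle v,w\rangle \geq$ the geometric-mean expression that squares to $\left(\left(\sum a_i\right)^2 - \sum a_i^2\right)\left(\left(\sum b_i\right)^2 - \sum b_i^2\right)$ via one more Cauchy--Schwarz on the index set $P$. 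I expect the main obstacle to be bookkeeping: getting the cross-terms in $\langle v, w\rangle = \sum_{p, q} \langle v_p, w_q\rangle$ to organize correctly, and in particular verifying the inner Cauchy--Schwarz step $\left(\sum_p \sqrt{a_i a_j}\sqrt{b_i b_j}\right)^2 \leq \left(\sum_p a_i a_j\right)\left(\sum_p b_i b_j\right)$, which is routine but must be lined up precisely with the outer estimate. If this two-vector packaging proves awkward, the fallback is to prove the inequality by a direct expansion of both sides into sums over triples and quadruples of indices and check the resulting polynomial inequality term by term, or to reduce to the two-variable case $r=2$ by a smoothing/convexity argument.
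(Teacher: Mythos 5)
Your reformulation of the inequality as $\bigl(\sum_{\{i,j\}}(a_ib_j+a_jb_i)\bigr)^2\ge 4\bigl(\sum_{\{i,j\}}a_ia_j\bigr)\bigl(\sum_{\{i,j\}}b_ib_j\bigr)$ over unordered pairs is correct, but the proposal stops well short of a proof, and the particular packaging you sketch would fail. First, the ``key identity'' $|v|^2=\sum_p(a_ib_j+a_jb_i)$ for $v=\sum_p v_p$ is false: for $v_p\in\mathbb{R}^2$ one has $|\sum_p v_p|^2=\sum_{p,q}\langle v_p,v_q\rangle$, which picks up off-diagonal cross terms and in general exceeds $\sum_p|v_p|^2$. (Stacking the $v_p$ into a single vector in $\mathbb{R}^{2|P|}$ rather than summing them in $\mathbb{R}^2$ would repair the norm identity, but then the second problem below is fatal.) Second, and more seriously, the ``inner'' estimate you need, $\sum_p\sqrt{a_ia_j}\sqrt{b_ib_j}\ge\sqrt{\bigl(\sum_p a_ia_j\bigr)\bigl(\sum_p b_ib_j\bigr)}$, is the \emph{reverse} of what Cauchy--Schwarz supplies on the index set $P$, and it is simply false: with $r=3$, $a=(1,1,2)$, $b=(2,1,1)$, the left side is $2+2\sqrt2\approx4.83$ while the right side is $\sqrt{5\cdot5}=5$. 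So the two-vector picture cannot be made to close no matter how the bookkeeping is organized, and the fallbacks you mention (direct expansion over triples and quadruples, or smoothing to $r=2$) are not carried out.

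The paper's argument is genuinely different and avoids the pairwise decomposition altogether. It applies Cauchy--Schwarz once to the sequences $(a_i)$ and $(b_i)$ themselves, giving $\sum a_ib_i\le\sqrt{(\sum a_i^2)(\sum b_i^2)}$. Since $\sum a_ib_i$ is the term being \emph{subtracted}, this produces the lower bound
\[
\left(\sum a_i\right)\left(\sum b_i\right)-\sum a_ib_i \;\ge\; \left(\sum a_i\right)\left(\sum b_i\right)-\sqrt{\left(\sum a_i^2\right)\left(\sum b_i^2\right)}\;\ge\;0,
\]
and squaring plus one application of AM--GM to the resulting cross term finishes the proof. The structural point you missed is that the left-hand side of the lemma is of the form $(X-Y)^2$ where $Y$ is precisely the quantity Cauchy--Schwarz knows how to \emph{upper} bound; your approach instead tries to lower bound a positive-looking sum over pairs by a Cauchy--Schwarz geometric mean, which is against the grain of that inequality and, as the counterexample shows, cannot work.
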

\begin{proof}
The Cauchy-Schwarz inequality yields
\begin{align*}
    \left(\sum_{i=1}^r a_i\right)\left(\sum_{i=1}^r b_i\right) - \sum_{i=1}^r a_i b_i 
    \geq \left(\sum_{i=1}^r a_i\right)\left(\sum_{i=1}^r b_i\right) - \sqrt{\left(\sum_{i=1}^r a_i^2\right)\left(\sum_{i=1}^r b_i^2\right)}.
\end{align*}
This last quantity is clearly nonnegative, since $\left(\sum_{i=1}^r a_i\right)^2\geq \sum_{i=1}^r a_i^2$ and $\left(\sum_{i=1}^r b_i\right)^2\geq \sum_{i=1}^r b_i^2$. So,
\begin{align*}
    &\left(\left(\sum_{i=1}^r a_i\right)\left(\sum_{i=1}^r b_i\right) - \sum_{i=1}^r a_i b_i \right)^2 \geq \left(\left(\sum_{i=1}^r a_i\right)\left(\sum_{i=1}^r b_i\right) - \sqrt{\left(\sum_{i=1}^r a_i^2\right)\left(\sum_{i=1}^r b_i^2\right)}\right)^2 \\
    &= \left(\sum_{i=1}^r a_i\right)^2\left(\sum_{i=1}^r b_i\right)^2 + \left(\sum_{i=1}^r a_i^2\right)\left(\sum_{i=1}^r b_i^2\right) - 2 \left(\sum_{i=1}^r a_i\right)\left(\sum_{i=1}^r b_i\right)\sqrt{\left(\sum_{i=1}^r a_i^2\right)\left(\sum_{i=1}^r b_i^2\right)} \\
    &\geq \left(\sum_{i=1}^r a_i\right)^2\left(\sum_{i=1}^r b_i\right)^2 + \left(\sum_{i=1}^r a_i^2\right)\left(\sum_{i=1}^r b_i^2\right) - \left(\sum_{i=1}^r a_i\right)^2\left(\sum_{i=1}^r b_i^2\right) - \left(\sum_{i=1}^r b_i\right)^2\left(\sum_{i=1}^r a_i^2\right) \\
    &= \left(\left(\sum_{i=1}^r a_i\right)^2 - \sum_{i=1}^r a_i^2 \right)\left(\left(\sum_{i=1}^r b_i\right)^2 - \sum_{i=1}^r b_i^2 \right),
\end{align*}
where the last inequality is an application of the AM-GM inequality.
\end{proof}

Given a graph $G$ and vertex sets $S,T\subseteq V(G)$, let
\[
e_G(S,T)=\#\{(s,t)\in S\times T:\: s\sim_G t\}.
\]
We write $e(S,T)$ for $e_G(S,T)$ when the graph in question is unambiguous. Let $G[S]$ denote the induced subgraph of $G$ on the vertex set $S$. Lemma~\ref{lem:weightcs} immediately implies the following.
\begin{corollary}
\label{cor:weightcs}
Let $G$ be a complete multipartite graph. For any $S, T\subseteq V(G)$, we have
\[
e(S,T)^2\geq 4|E(G[S])||E(G[T])|.
\]
\end{corollary}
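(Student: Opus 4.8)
The plan is to reduce the statement to a direct application of Lemma~\ref{lem:weightcs} by parametrizing everything in terms of how $S$ and $T$ meet the parts of $G$.

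First I would write $G$ as a complete $r$-partite graph with parts $W_1,\dots,W_r$, and for each $i$ set $s_i=|S\cap W_i|$ and $t_i=|T\cap W_i|$. Since the induced subgraph $G[S]$ is itself a complete multipartite graph with part sizes $s_1,\dots,s_r$, a standard count (total possible edges minus the non-edges inside parts) gives
\[
|E(G[S])|=\frac{1}{2}\left(\Bigl(\sum_{i=1}^r s_i\Bigr)^2-\sum_{i=1}^r s_i^2\right),
\]
and likewise $|E(G[T])|=\frac{1}{2}\bigl((\sum_i t_i)^2-\sum_i t_i^2\bigr)$.

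Next I would compute $e(S,T)$. An ordered pair $(s,t)\in S\times T$ is counted precisely when $s$ and $t$ lie in distinct parts of $G$; grouping by which parts they lie in yields
\[
e(S,T)=\sum_{i\neq j}s_i t_j=\Bigl(\sum_{i=1}^r s_i\Bigr)\Bigl(\sum_{i=1}^r t_i\Bigr)-\sum_{i=1}^r s_i t_i.
\]
(This identity holds even when $S\cap T\neq\emptyset$, since a vertex is never adjacent to itself, so the "diagonal" contributes nothing.) Applying Lemma~\ref{lem:weightcs} with $a_i=s_i$ and $b_i=t_i$, the left-hand side of the lemma becomes exactly $e(S,T)^2$, while the right-hand side becomes $\bigl(2|E(G[S])|\bigr)\bigl(2|E(G[T])|\bigr)=4|E(G[S])||E(G[T])|$, which is the desired inequality.

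There is no real obstacle here beyond bookkeeping: the only points needing care are verifying the edge-count identity for an induced complete multipartite subgraph and checking that the "ordered cross-pairs" definition of $e(S,T)$ indeed equals $\bigl(\sum_i s_i\bigr)\bigl(\sum_i t_i\bigr)-\sum_i s_i t_i$. Once the quantities are matched to the shape of Lemma~\ref{lem:weightcs}, the corollary is immediate.
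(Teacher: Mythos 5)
Your proof is correct and follows essentially the same route as the paper: parametrize $S$ and $T$ by their intersections with the parts, express $e(S,T)$, $|E(G[S])|$, and $|E(G[T])|$ in terms of those counts, and apply Lemma~\ref{lem:weightcs} directly. The added remark that the identity for $e(S,T)$ holds even when $S\cap T\neq\emptyset$ is a nice clarifying touch but does not change the argument.
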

\begin{proof}
Suppose that $G$ is $r$-partite. Let $V(G)=V_1\cup \cdots \cup V_r$ be a partition of the vertices of $G$ into $r$ independent sets. Let $a_i=|V_i\cap S|$ and $b_i=|V_i\cap T|$, so
\[
e(S,T)=\sum_{\substack{1\leq i,j\leq r\\ i\neq j}} a_{i}b_{j}=\left(\sum_{i=1}^r a_i\right)\left(\sum_{i=1}^r b_i\right) - \sum_{i=1}^r a_i b_i,
\]
while
\[
|E(G[S])|=\frac{1}{2}\left(\left(\sum_{i=1}^r a_i\right)^2 - \sum_{i=1}^r a_i^2 \right), \qquad |E(G[T])|=\frac{1}{2}\left(\left(\sum_{i=1}^r b_i\right)^2 - \sum_{i=1}^r b_i^2 \right).
\]
Then Lemma~\ref{lem:weightcs} indeed yields $e(S,T)^2\geq 4|E(G[S])||E(G[T])|$, as desired.
\end{proof}

\begin{proof}[Proof of Theorem~\ref{thm:main}]
Let $V=V(G)=V_1\cup\cdots\cup V_r$ be a partition of the vertices of $G$ into $r$ independent sets, let $H_1,\dots, H_k$ be the connected components of $H$, and let $V_{i,\ell}=V_i\cap H_\ell$. We have
\[
|E(H)|=\sum_{j=1}^k |E(H_\ell)|, \qquad |V_i|=\sum_{\ell=1}^k |V_{i,\ell}|\: \text{ for all } i.
\]
For any subset $S\subseteq V$, define $f(S)=\frac{1}{2}e_G(S,V)$. Note that
\[
f(S)=\frac{1}{2}\sum_{\substack{1\leq i,j\leq r\\i\neq j}} |V_{i}\cap S||V_{j}|,
\]
so $f(S)$ can be viewed as a weighted vertex count for $S$, with the property that
\[
\sum_{\ell=1}^k f(V(H_\ell))=\frac{1}{2}\sum_{\ell=1}^k e_G(V(H_\ell),V)=\frac{1}{2}e_G(V,V)=|E(G)|.
\]
Then by the generalized mediant inequality, for some $H'=H_\ell$ we have
\[
\frac{|E(H')|}{f(V(H'))}\geq \frac{\sum_{\ell=1}^k |E(H_\ell)|}{\sum_{\ell=1}^k f(V(H_\ell))} = \frac{|E(H)|}{|E(G)|}.
\]
Now, Corollary~\ref{cor:weightcs} applied with $S=V(H')$, $T=V$ yields $f(V(H'))^2\geq |E(G[V(H')])||E(G)|\geq |E(H')||E(G)|$, so that
\[
\frac{|E(H)|}{|E(G)|}\leq \frac{|E(H')|}{f(V(H'))}\leq \sqrt{\frac{|E(H')|}{|E(G)|}},
\]
which rearranges to the desired inequality.
\end{proof}

\section{Coloring problems}
\label{sec:color}
We can now apply Theorem~\ref{thm:main} to the corresponding coloring problems. We have already seen that applying Theorem~\ref{thm:main} with $G=K_n$ readily yields the simple lower bound on $M(n,k)$ given in Corollary~\ref{cor:weaklb}. We now discuss how to improve this lower bound, before turning to a closely related problem on monochromatic trails and circuits.

\subsection{Lower bound on $M(n,k)$ for general $k$}
Our strategy for improving the lower bound on $M(n,k)$ is as follows: First, assuming that no monochromatic component has too many edges, we show that in the color with the highest density (say, red), we can upper bound the number of vertices covered by any set of components (or else we can finish with an average degree argument on the rest of the red components). Through a smoothing argument, this yields an upper bound on the sum of squares of the number of vertices in each red component, and thus a lower bound on the number of edges in the complete multipartite graph $G$ formed by deleting from $K_n$ all edges within the vertex sets of the red components. We finish by applying Theorem~\ref{thm:main} to $G$ to find a non-red component with many edges.

\begin{proof}[Proof of Theorem~\ref{thm:colorlb}]
Fix a $k$-coloring $\chi$ of the edges of $K_n$, and suppose that the largest number of edges in a monochromatic connected component in this coloring is $z\binom{n}{2}$. Without loss of generality, let red be the color with the most edges, so there are at least $\frac{1}{k}\binom{n}{2}$ red edges in our coloring. Let $\mathcal{C}_1=\{R_1,R_2,\dots,R_m\}$ be the set of red components, with
\begin{equation}
    \label{eqn:totalvtx}
    |V(R_1)|\geq |V(R_2)|\geq \cdots \geq |V(R_m)|, \qquad \sum_{i=1}^m |V(R_i)|=n.
\end{equation}
Let $x=\frac{1}{\binom{n}{2}}\sum_{i=1}^m |E(R_i)|$, so $x\geq \frac{1}{k}$. By assumption, $|E(R_i)|\leq z\binom{n}{2}$ for $1\leq i\leq m$. As in the proof of Corollary~\ref{cor:weaklb}, applying Theorem~\ref{thm:main} with $K_n$ as $G$ and its red color class (i.e. the spanning subgraph formed by its red edges) as $H$ yields a red component with at least $x^2 \binom{n}{2}$ edges, so by assumption we have $z\geq x^2\geq \frac{1}{k^2}$. Define $\delta=k-\frac{1}{\sqrt{z}}\geq 0$, so $z=\frac{1}{(k-\delta)^2}$.

For any $j\in [1,m-1]$, let $G_j$ be the complete graph on $\bigcup_{i=j+1}^m V(R_i)$, with its coloring induced by $\chi$. Applying Theorem~\ref{thm:main} with $G_j$ as $G$ and the red color class of $G_j$ as $H$ yields a red connected component $H'=R_i$ with $|E(H')|\geq \frac{|E(H)|^2}{|E(G_j)|}$. Since
\[
|E(H)|=\sum_{i=j+1}^m |E(R_i)|=x\binom{n}{2}-\sum_{i=1}^{j}|E(R_i)| \geq x\binom{n}{2}-jz\binom{n}{2},
\]
while $|E(G_j)|=\binom{|V(G_j)|}{2}=\binom{n-\sum_{i=1}^{j}|V(R_i)|}{2}$, we have 
\[
z\binom{n}{2}\geq |E(H')|\geq \frac{\max(x-jz,0)^2 \binom{n}{2}^2}{\binom{n-\sum_{i=1}^{j}|V(R_i)|}{2}}.
\]
Since $\frac{\max(x-jz,0)}{\sqrt{z}}\leq \frac{x}{\sqrt{z}}\leq 1$, this implies
\[
\binom{n-\sum_{i=1}^{j}|V(R_i)|}{2}\geq \frac{\max(x-jz,0)^2}{z}\binom{n}{2}\geq \binom{\frac{\max(x-jz,0)}{\sqrt{z}}n}{2}.
\]
Since $n-\sum_{i=1}^{j}|V(R_i)|\geq 1$, and the function $f(X)=\binom{X}{2}$ is increasing for $X\geq 1$, this then implies
\begin{equation}
    \label{eqn:vtxbounds}
    \sum_{i=1}^{j}|V(R_i)|\leq (1-x/\sqrt{z} + j\sqrt{z})n \qquad \text{for all }j\in [1,m-1].
\end{equation}
We can now give an upper bound on $\sum_{i=1}^m \binom{|V(R_i)|}{2}$ by solving the corresponding convex optimization problem. The proof of the following technical lemma will be deferred until the end of the section.

\begin{lemma}
\label{lem:smoothing}
Let $x,z>0$. Subject to the constraints $v_1\geq \cdots \geq v_m \geq 0$, $\sum_{i=1}^m v_i= 1$, and
\[
\sum_{i=1}^j v_i \leq 1-x/\sqrt{z}+j\sqrt{z} \qquad \text{for all }j\in [1,m-1],
\]
the quantity $\sum_{i=1}^m v_i^2$ is maximized when $v_1=1-x/\sqrt{z}+\sqrt{z}$, $v_i=\sqrt{z}$ for $2\leq i\leq \lfloor \frac{x}{z}\rfloor$, $v_{\lfloor \frac{x}{z}\rfloor+1}=x/\sqrt{z}-\lfloor \frac{x}{z}\rfloor \sqrt{z}$, and $v_i=0$ for $i>\lfloor \frac{x}{z}\rfloor+1$.
\end{lemma}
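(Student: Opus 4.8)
The plan is to pass to the partial sums and reduce the lemma to a majorization inequality. Write $s=\sqrt z$ and $t=x/\sqrt z$, so that the upper bounds in the hypothesis read $c_j:=1-t+js$, and for a feasible vector $v=(v_1,\dots,v_m)$ set $S_j=v_1+\cdots+v_j$. The first --- and essentially only substantive --- observation is that, since $S_j\le c_j$ by hypothesis and $S_j\le S_m=1$ because the $v_i$ are non-negative, every feasible $v$ satisfies
\[
S_j\le\min(1,c_j)\qquad\text{for all }1\le j\le m.
\]

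Next I would identify the claimed maximizer $w^*=(w_1^*,\dots,w_m^*)$ as the feasible vector for which all of these bounds are tight. Because $j\mapsto c_j$ is increasing and $c_j\le 1$ exactly when $j\le t/s=x/z$, the integer $P:=\lfloor x/z\rfloor$ is the largest index with $c_j\le 1$; a direct computation then shows $S_j(w^*)=c_j=\min(1,c_j)$ for $1\le j\le P$ and $S_j(w^*)=1=\min(1,c_j)$ for $P<j\le m$. One also checks that $w^*$ is feasible, i.e. that $w^*_1\ge w^*_2\ge\cdots\ge w^*_m\ge 0$; this reduces to $1-t+s\ge s\ge t-Ps\ge 0$, which is exactly the regime $z\le x\le\sqrt z$ (with $P+1\le m$) in which the stated formula is meaningful --- and all of these conditions hold in the application of the lemma within the proof of Theorem~\ref{thm:colorlb}, where $z\ge x^2$.

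The proof then concludes in one step. Both $v$ and $w^*$ are non-increasing, non-negative, and sum to $1$, and the two displays above give $\sum_{i\le j}v_i=S_j(v)\le S_j(w^*)=\sum_{i\le j}w^*_i$ for every $j$, with equality at $j=m$; that is, $v$ is majorized by $w^*$. Applying Karamata's inequality to the convex function $u\mapsto u^2$ (equivalently, using the Schur-convexity of $u\mapsto\sum_i u_i^2$) gives $\sum_i v_i^2\le\sum_i (w^*_i)^2$, which is the assertion of the lemma.

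The conceptual work is entirely in the first observation; after that the lemma is a black-box consequence of majorization. The step I expect to need the most care is the bookkeeping around $\lfloor x/z\rfloor$ --- pinning down $S_j(w^*)=\min(1,c_j)$ exactly, including the ``fractional'' coordinate $w^*_{P+1}=t-Ps$ --- together with confirming the feasibility of $w^*$. If one prefers to avoid majorization, the same conclusion follows from a direct smoothing argument: starting from an arbitrary feasible $v$, repeatedly move a small amount of mass from the last nonzero coordinate onto $v_1$ until $v_1=c_1$, then onto $v_2$, and so on, at each step weakly increasing $\sum_i v_i^2$ while preserving feasibility; this is likely the route the paper takes, with the majorization formulation simply bundling all the exchanges together.
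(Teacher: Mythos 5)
Your proof is correct, and it takes a genuinely different and cleaner route than the paper. Where you pass to partial sums, observe that the claimed maximizer $w^*$ is the unique non-increasing vector with $S_j(w^*)=\min(1,c_j)$ for all $j$, and then invoke Karamata's inequality (Schur-convexity of $\sum u_i^2$), the paper instead runs a direct local exchange argument: it fixes a maximizer, defines the set $S$ of indices at which the constraints $A_j$ are tight, proves an ``equality condition'' (for $i<j$, either $v_i=v_{i-1}$, or $v_j=v_{j+1}$, or $S\cap[i,j-1]\neq\emptyset$) from the impossibility of profitable $(v_i,v_j)\mapsto(v_i+\eps,v_j-\eps)$ moves, and then does a short case analysis to force $S=[\lfloor x/z\rfloor]$. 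Your majorization formulation packages exactly these exchange moves into a single black box, so the proofs are morally equivalent, but your version is shorter, avoids the case analysis, and makes the structure of the extremizer transparent. You are also right to flag the implicit feasibility regime: the lemma's stated formula requires $z\le x\le\sqrt{z}$ (and $m\ge\lfloor x/z\rfloor+1$) for $w^*$ to be a feasible vector at all; the paper does not spell this out, but it holds where the lemma is applied (there $z\ge x^2$ is established explicitly, and if $z>x$ then $z>1/k$ already exceeds the target bound, so that case is harmless). One small stylistic note: you could state more precisely that the majorization conclusion gives the \emph{value} of the maximum is $\sum(w_i^*)^2$, while the lemma additionally asserts the maximum is \emph{attained} at $w^*$, but that is immediate once $w^*$ is checked to be feasible, which you do.
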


Let $v_i=\frac{|V(R_i)|}{n}$. Since \eqref{eqn:totalvtx} and \eqref{eqn:vtxbounds} hold, we can apply Lemma~\ref{lem:smoothing} to obtain
\[
\sum_{i=1}^m v_i^2\leq (1-x/\sqrt{z}+\sqrt{z})^2 + \left(\left\lfloor \frac{x}{z}\right\rfloor-1\right) z +\left(\left(x/z-\left\lfloor \frac{x}{z}\right\rfloor\right) \sqrt{z}\right)^2 \leq (1-x/\sqrt{z}+\sqrt{z})^2 + (x/z-1)z,
\]
so that we have
\begin{align*}
    \sum_{i=1}^m \binom{|V(R_i)|}{2} &=  \frac{n^2 \sum_{i=1}^m v_i^2 -n }{2}  \leq \frac{n^2 ((1-x/\sqrt{z}+\sqrt{z})^2 + (x/z-1)z) - n}{2}.
\end{align*}
Finally, let $G$ be the complete $m$-partite graph obtained from $K_n$ by removing all edges within each $V(R_i)$, with its coloring induced by $\chi$. There are no red edges in $G$, so by the pigeonhole principle, one of the $k-1$ remaining colors has at least $\frac{1}{k-1}|E(G)|$ edges in $G$. Let $H$ be the spanning subgraph of $G$ induced by the edges in that color. Applying Theorem~\ref{thm:main} then yields a monochromatic connected component $H'$ with at least $\frac{1}{(k-1)^2}|E(G)|$ edges. Then by assumption we have
\begin{align*}
    z &\geq \frac{1}{(k-1)^2}\frac{|E(G)|}{\binom{n}{2}} = \frac{1}{(k-1)^2} \left(1- \frac{\sum_{i=1}^m \binom{|V(R_i)|}{2}}{\binom{n}{2}} \right) \\
    &\geq \frac{1}{(k-1)^2} \left(1- \frac{n^2((1-x/\sqrt{z}+\sqrt{z})^2 + (x/z-1)z) - n}{n^2-n} \right)\\
    &\geq \frac{1}{(k-1)^2} \left(1- (1-x/\sqrt{z}+\sqrt{z})^2 - (x/z-1)z \right), 
\end{align*}
which rearranges to give
\[
1-(k-1)^2 z \leq (1-x/\sqrt{z}+\sqrt{z})^2 + (x/z-1)z=\frac{1}{z}x^2 - (1+2/\sqrt{z})x+(1+2\sqrt{z}).
\]
The right hand side is a quadratic in $x$ that is decreasing for $x\leq \frac{z}{2}+\sqrt{z}$. Since by assumption $\sqrt{z}\geq x\geq \frac{1}{k}$, we then have
\[
1-(k-1)^2 z \leq \frac{1}{z k^2} - (1+2/\sqrt{z})\frac{1}{k}+(1+2\sqrt{z}).
\]
Substituting in $z=\frac{1}{(k-\delta)^2}$ yields
\[
1-\frac{(k-1)^2}{(k-\delta)^2}\leq \frac{(k-\delta)^2}{k^2} - \frac{1+2(k-\delta)}{k}+1+\frac{2}{k-\delta},
\]
which upon rearrangement becomes
\begin{align*}
    0 &\leq (k-1)^2 k^2 + (k-\delta)^4 - k(k-\delta)^2 - 2(k-\delta)^3 k + 2(k-\delta) k^2 \\
    &= -k^3+k^2-k\delta^2+2k^3\delta -2k\delta^3+\delta^4\\
    &=(k-\delta^2)^2 - k(k^2-\delta^2)(1-2\delta).
\end{align*}
This implies
\[
1-2\delta \leq \frac{(k-\delta^2)^2}{k(k^2-\delta^2)}<\frac{1}{k},
\]
so that $\delta>\frac{k-1}{2k}$. Thus, the coloring $\chi$ contains a monochromatic connected component with at least $z\binom{n}{2}$ edges, where
\[
z=\frac{1}{(k-\delta)^2}>\frac{1}{(k-\frac{1}{2}+\frac{1}{2k})^2}\geq \frac{1}{k^2-k+\frac{1}{4}+1-\frac{1}{4k}+\frac{1}{4k^2}} \geq \frac{1}{k^2-k+\frac{5}{4}},
\]
as desired.
\end{proof}

\begin{proof}[Proof of Lemma~\ref{lem:smoothing}]
Since the feasible region is compact, there exists a choice of the $v_i$ such that the desired maximum is attained. Fix such a maximizing choice of the $v_i$.
For $j\in [m]$, let $A_j$ denote the given constraint on $\sum_{i=1}^j v_i$, and let $S$ be the set of $j$ for which equality holds in $A_j$. Let $m'=\lfloor \frac{x}{z}\rfloor$. Since $\sum_{i=1}^m v_i=1$, the constraints $A_j$ for $j>m'$ cannot be tight, so $S\subseteq [m']$. For any $i<j$ and any $\varepsilon\in (0,v_j]$, replacing $(v_i,v_j)$ with $(v_i+\varepsilon, v_j-\varepsilon)$ increases the value of $\sum_{i=1}^m v_i^2$, so by maximality, no such ``smoothing'' operation is possible. That is, we can assume the following \textbf{equality condition} for all $(i,j)$ with $i<j$: Either $v_i=v_{i-1}$, or $v_j=v_{j+1}$, or $S\cap [i,j-1]\neq \emptyset$. If $S=[m']$, then we are in exactly the maximizing case described (since by the equality conditions for $(m'+1,i)$ we have $v_i=0$ for all $i\geq m'+2$), so assume otherwise.

First, suppose there is some $i_0\in [m']$ such that $v_{i_0}<\sqrt{z}$, and pick the smallest such index $i_0$. Let $i_1\in [m]$ be the largest index such that $v_{i_1}>0$, and note that $i_1>i_0$ since $\sum_{i=1}^{i_0} v_i<1$. Then we have $v_{i_0}<v_{i_0-1}$ and $v_{i_1}>v_{i_1+1}$. But as $\sum_{i=1}^j v_i\leq (1-x/\sqrt{z}+(j-1)\sqrt{z}) + v_j < (1-x/\sqrt{z}+(j-1)\sqrt{z})+ \sqrt{z}$ for any $j\geq i_0$, we have $S\cap [i_0,i_1]=\emptyset$, contradicting the equality condition for $(i_0,i_1)$. So, we can assume $v_i\geq \sqrt{z}$ for all $i\leq m'$. In particular, if $j\in S$ for some $j\in [m']$, then we recursively obtain $v_i=\sqrt{z}$ for all $i\in [j+1,m']$, so $[j,m']\subseteq S$.

Thus, we can assume $1\notin S$, i.e. $v_1<1-x/\sqrt{z}+\sqrt{z}$. By the equality condition for $(1,2)$, we must then have $v_2=v_3$. Let $j_0\geq 3$ be the smallest index such that $v_{j_0+1}\neq v_{j_0}$. By the equality condition for $(1,j_0)$, there is some $j_1\in S\cap [2,j_0-1]$. Then
\[
1-x/\sqrt{z}+j_1 \sqrt{z} = \sum_{i=1}^{j_1} v_i = v_1+(j_1-1) v_2 < 1-x/\sqrt{z}+\sqrt{z}+(j_1-1)v_2,
\]
which implies $v_2>\sqrt{z}$. But then $\sum_{i=1}^{j_1+1} v_i=1-x/\sqrt{z}+\sqrt{z}+j_1 v_2 > 1-x/\sqrt{z}+(j_1+1) \sqrt{z}$, violating $A_{j_1+1}$. This is a contradiction, so in fact $S=[m']$, and we are in the desired maximizing case.
\end{proof}

We remark that in Gyárfás's construction, after removing all edges contained in the vertex set of each red component, each non-red component is left with approximately $\frac{1}{(k-1)^2} (1-\frac{1}{k-1}) \binom{n}{2} = \frac{k-2}{(k-1)^3} \binom{n}{2}$ edges. Since $\frac{k-2}{(k-1)^3}=\frac{1}{k^2-k+1+\frac{1}{k-2}}$, this suggests that the method used to prove Theorem~\ref{thm:colorlb} cannot show a lower bound on $M(n,k)$ better than $\frac{1}{k^2-k+1}\binom{n}{2}$ without introducing additional ideas.

\subsection{Multicolor Ramsey numbers of trails and circuits}
\label{subsec:trails}
A \emph{trail} is a walk without repeated edges, and a \emph{circuit} is a trail with the same first and last vertex. The ($k$-color) Ramsey problem for trails is the question of finding the largest $m$ such that every $k$-coloring of the edges of $K_n$ contains a monochromatic trail of length $m$.

Answering a question of Osumi \cite{osumi2021ramsey}, Conlon and Tyomkyn \cite{conlon2021ramsey} show that every $2$-coloring of the edges of $K_n$ contains a monochromatic circuit with at least $\frac{2}{9}n^2+O(n^{3/2})$ edges, and this is asymptotically tight. For the case of general $k$, they observe that by deleting a forest in each color class of a $k$-coloring of $K_n$ to make each color class Eulerian (i.e. ensuring every vertex has even degree in each color), one can reduce this Ramsey problem to a variant of the problem of determining $M(n,k)$. Where previously we colored the edges of $K_n$ and found a large monochromatic component, we now apply the same procedure to the graph obtained by deleting at most $kn$ edges from $K_n$. We now prove a lower bound for the general case of this problem, analogous to the bound on $M(n,k)$ given in Theorem~\ref{thm:colorlb}.

\begin{corollary}
\label{thm:trailsweak}
Every $k$-coloring of the edges of $K_n$ contains a monochromatic circuit (and hence a monochromatic trail) of length at least $\frac{1}{k^2-k+\frac{5}{4}}\binom{n}{2}+O_k(n)$.
\end{corollary}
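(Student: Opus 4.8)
The plan is to reduce Corollary~\ref{thm:trailsweak} to Theorem~\ref{thm:colorlb} via the Eulerian-reduction trick sketched just before the statement, and then invoke the standard fact that a connected Eulerian graph contains a closed Euler trail traversing all of its edges. Concretely, fix a $k$-coloring $\chi$ of $E(K_n)$. For each color $c$, let $H_c$ be the spanning subgraph of $K_n$ consisting of the edges of color $c$. The set of odd-degree vertices of $H_c$ has even size, so we may delete a set $F_c$ of at most $|V(H_c)|/2 \le n/2$ edges from $H_c$ to make every degree even; doing this greedily (e.g. pair up odd-degree vertices and delete a single edge of a path between them, or simply delete a spanning-forest-like set of edges) costs at most $n/2$ edges per color, hence at most $kn/2$ edges in total. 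Let $K_n'$ be $K_n$ with the union $\bigcup_c F_c$ removed; then $|E(K_n')| \ge \binom{n}{2} - \frac{kn}{2}$, the induced coloring makes each color class Eulerian, and moreover deleting edges cannot increase the maximum monochromatic component edge count by more than the number of deleted edges — in fact we only need that components only shrink.

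Next I would run the argument of Theorem~\ref{thm:colorlb}, but applied to $K_n'$ in place of $K_n$. The proof of Theorem~\ref{thm:colorlb} uses only Theorem~\ref{thm:main} (which holds for arbitrary complete multipartite graphs, so it is insensitive to deleting a few edges) together with counting identities of the form $|E(G)| = \binom{n}{2} - \sum_i \binom{|V(R_i)|}{2}$; replacing $K_n$ by $K_n'$ changes each such count by an $O_k(n)$ additive error, and since every bound in that proof is a fixed algebraic inequality in the edge-density parameters $x$ and $z$, the $O_k(n)$ perturbations propagate to an $O_k(n)$ loss in the final edge count. Thus $K_n'$ contains a monochromatic connected component $H'$ of some color $c$ with $|E(H')| \ge \frac{1}{k^2-k+\frac54}\binom{n}{2} + O_k(n)$. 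Since $H'$ is a connected component of the color-$c$ subgraph of $K_n'$, and that subgraph is Eulerian (every vertex has even degree), $H'$ is itself a connected Eulerian graph, so by Euler's theorem it has a closed Euler trail — a monochromatic circuit of length exactly $|E(H')| \ge \frac{1}{k^2-k+\frac54}\binom{n}{2} + O_k(n)$. A circuit is in particular a trail, giving the parenthetical claim.

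The only genuinely delicate point — and hence the main thing to get right rather than the main obstacle — is verifying that the chain of inequalities in the proof of Theorem~\ref{thm:colorlb} is robust to the $O_k(n)$ edge deletions: one must check that the inequality ``$\frac{\max(x-jz,0)}{\sqrt z} \le \frac{x}{\sqrt z} \le 1$'', the monotonicity of $\binom{X}{2}$ for $X \ge 1$, and the final quadratic-in-$x$ estimate all survive when $\binom{n}{2}$ is replaced by $\binom{n}{2} - O_k(n)$ throughout; since $k$ is fixed this is routine but should be stated. One clean way to package this is to observe that running the Theorem~\ref{thm:colorlb} argument verbatim on any complete-multipartite-with-few-edges-deleted host graph on $n$ vertices produces a monochromatic component with at least $\frac{1}{k^2-k+\frac54}\bigl(\binom{n}{2} - O_k(n)\bigr)$ edges, and $\frac{1}{k^2-k+\frac54}\bigl(\binom{n}{2} - O_k(n)\bigr) = \frac{1}{k^2-k+\frac54}\binom{n}{2} + O_k(n)$. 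Everything else is bookkeeping: counting the deleted edges per color, and citing Euler's theorem.
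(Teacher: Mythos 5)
Your proposal follows essentially the same route as the paper's proof: delete $O_k(n)$ edges to make every color class Eulerian, rerun the Theorem~\ref{thm:colorlb} argument tolerating $O_k(n^{-1})$ perturbations in the density parameters $x$ and $z$, and then invoke Euler's theorem on the resulting connected Eulerian monochromatic component. So the structure is correct and matches the paper.

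One small slip worth flagging: your justification that at most $|V(H_c)|/2$ edges suffice to Eulerianify $H_c$ is not right as written. ``Pair up odd-degree vertices and delete a single edge of a path between them'' does not change the parities of the paired vertices $u,v$ unless the deleted edge is incident to one of them --- deleting an interior edge $xy$ of a $u$--$v$ path flips the parities of $x$ and $y$, not $u$ and $v$. The correct operation is to delete the \emph{entire} path (or, as the paper does, a forest whose set of odd-degree vertices matches that of $H_c$), which in general costs up to $n-1$ edges per color, not $n/2$. This is only a bookkeeping error --- $n-1$ per color is still $O_k(n)$ total --- and your fallback phrasing (``delete a spanning-forest-like set of edges'') is the right one, but the $n/2$ bound and its ``single edge of a path'' justification should be dropped.
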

\begin{proof}
Fix a $k$-coloring of $E(K_n)$. As in \cite{conlon2021ramsey}, we can remove from each color class a forest that meets all odd degree vertices, leaving a coloring where every color class, and hence every monochromatic connected component, is Eulerian. Let $\chi$ be the resulting partial $k$-coloring of $E(K_n)$, where the (at most $kn$) removed edges are left uncolored, and let $z\binom{n}{2}$ be the largest number of edges in a monochromatic component in this coloring. It suffices to show that $z\geq \frac{1}{k^2-k+\frac{5}{4}}+O_k(n^{-1})$, since every monochromatic component is Eulerian, and thus contains an Eulerian circuit. Fix a color (say, red) with $x\binom{n}{2}$ edges, where $x\geq \frac{1}{k} \frac{\binom{n}{2}-nk}{\binom{n}{2}}=\frac{1}{k}-\frac{2}{n-1}$. As before, applying Theorem~\ref{thm:main} with $G=K_n$ immediately yields $z\geq x^2\geq \frac{1}{k^2}-O(n^{-1})$. Note that this means there is a monochromatic circuit with at least $\left(\frac{1}{k^2}-O(n^{-1})\right)\binom{n}{2}=\frac{1}{k^2}\binom{n}{2}+O(n)$ edges.

To improve this lower bound further, we proceed as in the proof of Theorem~\ref{thm:colorlb}. Letting $R_1,\dots,R_m$ be the red components, such that \eqref{eqn:totalvtx} holds, we obtain \eqref{eqn:vtxbounds} in the same manner as before, so we can once again apply Lemma~\ref{lem:smoothing} to obtain the same upper bound on $\sum_{i=1}^m \binom{|V(R_i)|}{2}$ in terms of $x$ and $z$. Let $G$ be the complete $m$-partite graph with parts $V(R_1),\dots,V(R_m)$, with its partial coloring induced by $\chi$. Since $G$ has no red edges, and at most $kn$ edges are uncolored, one of its color classes $H$ has at least $\frac{1}{k-1}(|E(G)|-kn)=\left(\frac{1}{k-1}-O_k(n^{-1})\right)|E(G)|$ edges. Applying Theorem~\ref{thm:main} as before yields
\begin{align*}
    z &\geq \left(\frac{1}{k-1}-O_k(n^{-1})\right)^2\frac{|E(G)|}{\binom{n}{2}} 
    \geq \left(\frac{1}{(k-1)^2}-O_k(n^{-1})\right) \left(1- \frac{\sum_{i=1}^m \binom{|V(R_i)|}{2}}{\binom{n}{2}} \right) \\
    &\geq \left(\frac{1}{(k-1)^2+O_k(n^{-1})}\right) \left(1- (1-x/\sqrt{z}+\sqrt{z})^2 - (x/z-1)z \right).
\end{align*}
Letting $z=\frac{1}{(k-\delta)^2}$ and noting that $x\geq \frac{1}{k}-O(n^{-1})$, we can perform the same rearrangements and substitutions as in the proof of Theorem~\ref{thm:colorlb}, simply separating out the $O_k(n^{-1})$ terms at each step, to derive the inequality $1-2\delta < \frac{1}{k} + O_k(n^{-1})$, and thus 
\[
z\geq \frac{1}{k^2-k+\frac{5}{4}-O_k(n^{-1})}= \frac{1}{k^2-k+\frac{5}{4}}+O_k(n^{-1}).
\]
Then the coloring $\chi$ contains a monochromatic connected component, and hence a monochromatic Eulerian circuit, with at least $z\binom{n}{2}\geq \frac{1}{k^2-k+\frac{5}{4}}\binom{n}{2}+O_k(n)$ edges, as claimed.
\end{proof}

\subsection{Three colors}
In this section, we prove the lower and upper bounds on $M(n,3)$ in separate lemmas in order to establish Theorem~\ref{thm:color3}. We then discuss the behavior of $M(n,3)$ for small values of $n$, and conclude by describing how to adapt our proofs to obtain asymptotically tight bounds for the Ramsey numbers of trails and circuits in three colors.

\begin{lemma}
\label{lem:3lb}
Every $3$-coloring of the edges of $K_n$ contains a monochromatic component with at least $\lceil \frac{1}{6}\binom{n}{2}\rceil$ edges.
\end{lemma}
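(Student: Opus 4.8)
The plan is to argue by assuming the conclusion fails and reaching a contradiction. Fix a $3$-coloring of $E(K_n)$ and write $z\binom{n}{2}$ for the largest number of edges in a monochromatic component; it suffices to show $z\binom{n}{2}\geq\lceil\frac{1}{6}\binom{n}{2}\rceil$, so suppose instead that $z\binom{n}{2}<\lceil\frac{1}{6}\binom{n}{2}\rceil$. Let red be a color carrying the most edges, so red has at least $\frac{1}{3}\binom{n}{2}$ edges, and let $R_1,\dots,R_m$ be its connected components.

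The first step is an easy reduction. If at most two of the $R_i$ contain an edge, then the one with the most edges has at least $\frac{1}{2}\cdot\frac{1}{3}\binom{n}{2}=\frac{1}{6}\binom{n}{2}$ edges, hence at least $\lceil\frac{1}{6}\binom{n}{2}\rceil$, and we are done; so we may assume at least three of the $R_i$ are non-trivial, in particular $m\geq 3$. Deleting from $K_n$ every edge lying inside some $V(R_i)$ leaves a complete $m$-partite graph $G$ carrying only the two non-red colors; pigeonhole together with Theorem~\ref{thm:main} applied to $G$ produces a monochromatic component with at least roughly $\frac{1}{4}|E(G)|$ edges, so we may further assume $|E(G)|<\frac{2}{3}\binom{n}{2}$, i.e.\ $\sum_i\binom{|V(R_i)|}{2}>\frac{1}{3}\binom{n}{2}$. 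This is the ``concentrated'' regime in which the accounting behind Theorem~\ref{thm:colorlb} only yields $z\geq\frac{4}{29}\approx 0.138$ rather than $\frac{1}{6}$, because the factor $\frac{1}{4}$ above is too lossy; some new input is required.

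The new ingredient, building on ideas from \cite{Gyarfas2011Survey}, is to exploit much more carefully that $G$ carries only two colors. Where a $2$-coloring of a complete graph always has a connected spanning color class, for complete $m$-partite graphs with $m\geq 3$ one expects a strong dichotomy: either one color class is connected and spanning on $G$ --- in which case the corresponding component of $K_n$ collects every edge of that color, so either it already has $\frac{1}{6}\binom{n}{2}$ edges or that color is very sparse, forcing the other color to be dense on $G$ and Theorem~\ref{thm:main} to yield a large component --- or the coloring of $G$ is of an exceptional ($K_{2,2}$-like) type that the hypotheses $m\geq 3$ and the component caps can rule out. Carrying this out should reduce matters to the single configuration in which almost all of $\sum_i\binom{|V(R_i)|}{2}$ comes from one red component $R_1$ with $|V(R_1)|=\alpha n$ for $\alpha$ bounded away from both $0$ and $1$; then $K_{V(R_1)}$ has $\binom{\alpha n}{2}$ edges but at most $z\binom{n}{2}$ of them red, so its blue-green part is very dense, and the aim is to locate inside $V(R_1)$ a large complete multipartite subgraph on which Theorem~\ref{thm:main} forces a monochromatic component with more than $\frac{1}{6}\binom{n}{2}$ edges.

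I expect this final configuration to be the main obstacle: each available constraint --- the per-component cap $z\binom{n}{2}$, the bound $\sum_i|E(R_i)|\geq\frac{1}{3}\binom{n}{2}$ spread over at least three components, the sparseness of the spanning color --- is on its own only just too weak to close the gap from $\frac{4}{29}$ to $\frac{1}{6}$, so the argument must play them off against one another, probably through a rigidity statement to the effect that a $3$-coloring all of whose components are close to $\frac{1}{6}\binom{n}{2}$ edges must essentially be the blow-up of the affine plane of order two, which has exactly two red components, contradicting $m\geq 3$. One also has to track the $\lceil\cdot\rceil$ and $O(n)$ corrections throughout --- these are precisely what make the stated bound fail at $n=17$ yet hold for all $n\geq 18$ --- which is routine but unavoidable bookkeeping.
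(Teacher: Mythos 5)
Your first two reductions are sound: if at most two red components carry edges, pigeonhole over the red color class (which has at least $\frac{1}{3}\binom{n}{2}$ edges, an integer count per component) gives a component with at least $\lceil\frac{1}{6}\binom{n}{2}\rceil$ edges; and if red has at least three nontrivial components, then writing $G$ for the complete multipartite graph obtained by deleting all red-internal edges, pigeonhole plus Theorem~\ref{thm:main} on $G$ forces $|E(G)|<\frac{2}{3}\binom{n}{2}$. Your computation that the machinery of Theorem~\ref{thm:colorlb} alone gives only $z\geq\frac{4}{29}$ for $k=3$ is also correct, and it is right that a new idea is needed. But from that point on the proposal is an acknowledged sketch, not a proof: phrases such as ``one expects a strong dichotomy,'' ``carrying this out should reduce matters to,'' ``I expect this final configuration to be the main obstacle,'' and ``probably through a rigidity statement'' all signal that the crucial case (three or more red components, $|E(G)|<\frac{2}{3}\binom{n}{2}$) is not actually handled. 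No concrete argument is given for the claimed dichotomy between a spanning two-colored component of $G$ and a $K_{2,2}$-like exceptional configuration, nor for the rigidity statement you invoke, and there is no reason to believe they would follow without substantial new work. This is a genuine gap.

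The paper's argument is structurally quite different and avoids the obstacles you run into. Rather than removing all red-internal edges at once and working with the full complete $m$-partite remainder, it first reduces to the case where \emph{every} color has at least two components (a simpler dichotomy: if some color class is connected, either it already has $\frac{1}{6}\binom{n}{2}$ edges or one of the other two colors has at least $\frac{5}{12}\binom{n}{2}$ edges and Theorem~\ref{thm:main} with $G=K_n$ finishes). It then selects the single red component $H'$ of maximum average degree, shows $\frac{1}{3}n\leq |V(H')|\leq\frac{1}{2}n$, sets $V_1=V(H')$, $V_2=V\setminus V_1$, and analyzes the bipartite graph $G'$ between $V_1$ and $V_2$, which is entirely green and blue and has more than $\frac{1}{3}\binom{n}{2}$ edges. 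A short case analysis on whether a fixed monochromatic component of $G'$ covers all of $V_1$ or $V_2$ forces each color to have exactly two components, whence one component has at least $\frac{1}{6}\binom{n}{2}$ edges. This ``densest component $\Rightarrow$ controlled bipartition'' step is precisely the missing ingredient in your outline: it converts the problem to a two-coloring of a single nearly balanced bipartite graph, for which the structure can be pinned down explicitly rather than through an unspecified rigidity principle.
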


\begin{proof}
Let $G=K_n$, and call the three colors red, green, and blue. 
First, suppose one of the color classes (say, red) is connected. If there are at least $\frac{1}{6}|E(G)|$ red edges, we are done. Otherwise, the other two colors together have at least $\frac{5}{6}|E(G)|$ edges, so one of them has at least $ \frac{5}{12}|E(G)|$ edges. Applying Theorem~\ref{thm:main} with $G=K_n$ to the graph in that color then gives a monochromatic component with at least $(\frac{5}{12})^2 |E(G)| >\frac{1}{6}|E(G)|$ edges, so we are again done.

Thus, we can assume every color has at least two components. Without loss of generality, let red be the color with the most edges, so the red graph $H$ has at least $\frac{1}{3}|E(G)|$ edges. Let $H'$ be the component of $H$ with the highest average degree, so $|V(H')|\geq \bar{d}(H')+1\geq \frac{1}{3}n$. We can assume $|V(H')|\leq \frac{1}{2}n$; otherwise, we would have $|E(H')|\geq \frac{1}{2}|V(H')|\bar{d}(H') > \frac{1}{6}|E(G)|$. Let $V_1=V(H')$ and $V_2=V(G)\setminus V_1$, and let $G'$ be the bipartite graph induced by $G$ between $V_1$ and $V_2$, so $G'$ has at least $|V_1||V_2|\geq \frac{2}{9}n^2>\frac{1}{3}|E(G)|$ edges, all of which must be green or blue.

Fix an edge in $G'$ and consider the monochromatic component $C_1$ of $G'$ containing this edge. Without loss of generality, assume $C_1$ is green. Suppose $C_1$ covers all of $V_1$. Since every green edge in $G'$ intersects $V_1$, this means there is exactly one green component of $G'$ with a nonzero number of edges. Since there are at least two green components in $G$, there is a vertex $v\in V_2$ not in $C_1$. Then all edges between $v$ and $V_1$ must be blue, so all vertices of $V_1$ are in the same blue component in $G'$. This implies that there is also exactly one blue component of $G'$ with nonzeroly many edges. Thus, all edges of $G'$ are in one of two monochromatic components, one of which then has at least $\frac{1}{2}|E(G')|>\frac{1}{6}|E(G)|$ edges, as desired.

\begin{figure}[h]
    \centering
    \includegraphics[scale=0.8]{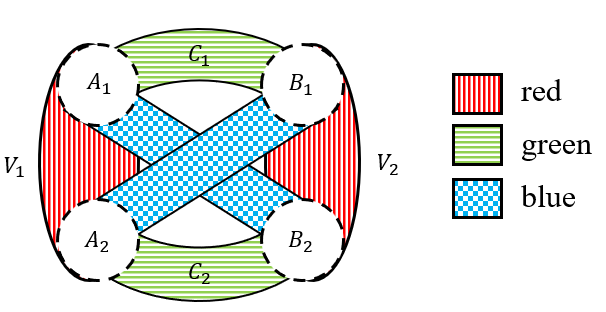}
    \caption{Lower bound: Exactly two components of each color}
    \label{fig:3color}
\end{figure}

We are likewise done if $C_1$ covers all of $V_2$, so we can assume $V_1\setminus C_1$ and $V_2\setminus C_1$ are both nonempty. Let $A_1=V_1\cap C_1$, $B_1=V_2\cap C_1$, $A_2=V_1\setminus A_1$, $B_2=V_2\setminus B_1$. Then all edges between $A_1$ and $B_2$, or between $A_2$ and $B_1$, can only be blue. Then there are at most two blue components, and thus by assumption exactly two. This in turn implies that all edges between $A_1$ and $B_1$, or between $A_2$ and $B_2$, can only be green, so there are exactly two green components $C_1$ and $C_2$. Finally, all edges between $B_1$ and $B_2$ can only be red, so we conclude that there are exactly two red components, $V_1$ and $V_2$; see Figure~\ref{fig:3color}.

Since each of the three colors has exactly two components, one of the components has at least $\frac{1}{6}|E(G)|$ edges, as desired.
\end{proof}

\begin{lemma}
\label{lem:3ub}
For sufficiently large $n$, there exists a $3$-coloring of the edges of $K_n$ such that every monochromatic component contains at most $\lceil \frac{1}{6} \binom{n}{2}\rceil$ edges.
\end{lemma}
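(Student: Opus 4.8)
The plan is to construct an explicit coloring based on Gyárfás's construction, which was referenced in the introduction as giving the upper bound $\frac{1}{2k(k-1)}n^2 + O(n)$, specialized to $k=3$. For $k=3$ we have $k-1=2$, which is a prime power, so the affine-plane-type construction applies. Concretely, I would partition $V(K_n)$ into four roughly equal parts $W_1,W_2,W_3,W_4$ of size $\frac{n}{4}+O(1)$ each, and color the edges using the structure of an affine plane of order $2$ (equivalently, the three perfect matchings of $K_4$ on the four ``super-vertices''): edges within a part get assigned to colors so that each color class consists of the union of two disjoint ``super-edges'' — that is, red consists of all edges between $W_1\cup W_2$-type pairings, etc. The point is to arrange that each color class has exactly two connected components, each spanning roughly half the vertices, so each monochromatic component has about $\frac{1}{2}\cdot\frac{1}{2}\binom{n}{2} \cdot (\text{something}) = \frac{1}{6}\binom{n}{2}$ edges after balancing. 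I would then check the arithmetic to confirm the $\frac{1}{6}$ constant comes out exactly, and finally adjust part sizes by $O(1)$ to get the precise ceiling $\lceil\frac16\binom{n}{2}\rceil$ rather than just an asymptotic bound.

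In more detail: take the three partitions of $\{1,2,3,4\}$ into two pairs, namely $\{12|34\}$, $\{13|24\}$, $\{14|23\}$. Color edge $uv$ with $u\in W_i$, $v\in W_j$ (including $i=j$) by the color corresponding to the partition that puts $i,j$ in the same block. For $i\ne j$ this is well-defined; for $i=j$ every partition puts $i$ with itself, so we need a separate rule — here I would further split each $W_i$ into three parts and distribute the internal edges among the three colors so as to keep component sizes balanced, or more simply split each $W_i$ into two halves and route internal edges to balance. Then the red component structure is: red edges join $W_1$ to $W_2$ and $W_3$ to $W_4$ (plus internal edges distributed appropriately), giving two red components on vertex sets of size $\frac{n}{2}+O(1)$; similarly for green and blue. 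Each monochromatic component has roughly $\binom{n/2}{2} + (\text{half the internal edges of two parts}) = \frac{1}{8}n^2 + O(n)$ edges; since $\frac16\binom{n}{2} = \frac{1}{12}n^2 + O(n)$, this does not immediately match, so the internal-edge distribution must actually be chosen to move edges \emph{out} of the large components, i.e. I would instead make each color class have its internal edges form small components or be absorbed — the correct construction has each color with exactly two components of size $n/2$ containing $\binom{n/2}{2}$ cross-part edges but \emph{no} internal edges, with internal edges of the $W_i$ handled by a finer recursive split. Getting this bookkeeping exactly right so the maximum component has precisely $\lceil\frac16\binom{n}{2}\rceil$ edges is the delicate part.

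The cleanest route, which I would pursue, is: split $V(K_n)$ into six parts $U_1,\dots,U_6$ of size $\frac{n}{6}+O(1)$, indexed by the six edges of $K_4$ on vertex set $\{1,2,3,4\}$. Color the edge between $U_e$ and $U_f$ (for edges $e,f$ of $K_4$, possibly $e=f$) by: if $e,f$ share a vertex $v$ of $K_4$, there is a third color among the three perfect matchings... actually, assign to the pair $(e,f)$ the color class (= perfect matching of $K_4$) determined by the combinatorial relationship of $e$ and $f$; one checks that each color class of $K_n$ then has exactly two connected components, each a union of three of the $U_i$'s, i.e. on $\frac{n}{2}+O(1)$ vertices, and with a controlled number of edges. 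Summing, each monochromatic component gets $\frac13\binom{n}{2}\cdot\frac12 = \frac16\binom{n}{2}$ edges up to lower-order terms, and I then fine-tune the $O(1)$ discrepancies in part sizes to land exactly on the ceiling.

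\textbf{Main obstacle.} The conceptual content is not hard — it is essentially writing down Gyárfás's construction — but the delicate step is the exact edge accounting: verifying that the constant is exactly $\frac16$ and not merely asymptotically so, handling the edges internal to each part, and choosing part sizes (and the routing of internal edges) so that for \emph{every} sufficiently large $n$ the largest monochromatic component has at most $\lceil\frac16\binom{n}{2}\rceil$ edges, matching Lemma~\ref{lem:3lb} exactly. The case analysis for small residues of $n$ modulo the number of parts, and confirming the claimed threshold $n\ge 18$ (with $n=17$ exceptional), is where the real care is required.
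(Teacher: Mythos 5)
Your high-level starting point matches the paper: partition $V(K_n)$ into four nearly equal parts $V_1,\dots,V_4$ and color the cross-edges according to the three perfect matchings of $K_4$ (this is exactly the Gyárfás-type construction the paper uses). But your proposal then goes off the rails in two ways.

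First, there is an arithmetic error that sends you looking for a different construction. You compute that a monochromatic component, say the red one on $V_1\cup V_2$, has roughly $\binom{n/2}{2}+(\text{half the internal edges of two parts})\approx\frac18 n^2$ edges and conclude the constant does not match $\frac16\binom{n}{2}\approx\frac{1}{12}n^2$. But $\binom{n/2}{2}$ is not the number of cross-edges between $V_1$ and $V_2$; the number of cross-edges is $|V_1||V_2|\approx n^2/16$. Adding an even one-third share of the internal edges of $V_1$ and $V_2$ gives $\frac{n^2}{16}+\frac{2}{3}\binom{n/4}{2}\approx\frac{n^2}{16}+\frac{n^2}{48}=\frac{n^2}{12}$, which \emph{does} hit $\frac16\binom{n}{2}$ on the nose asymptotically. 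So the four-part construction with balanced internal edges is the right one; the six-part affine-plane variant you pivot to is both unnecessary and unworked-out.

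Second, and more importantly, you never resolve the genuine difficulty, which you correctly flag as "the delicate part": when $n\not\equiv 0\pmod 4$, the parts $V_i$ differ in size by $1$, so the six cross-edge counts $e(V_i,V_j)$ already differ by $\Theta(n)$, and naively splitting internal edges evenly leaves some component over the ceiling by $\Theta(n)$. You acknowledge this needs a case analysis mod $4$ and "real care," but you supply no mechanism. The paper avoids case analysis entirely with a clean trick: define a coloring of the internal edges to be \emph{nice} if each green and blue component has \emph{exactly} $\lceil\frac16\binom{n}{2}\rceil$ edges (one checks a nice coloring exists for large $n$), then among all nice colorings pick one minimizing the larger red component, and finally show by a local edge-swap (exchange) argument that if the larger red component still exceeded the ceiling, one could swap a red internal edge with a green or blue internal edge to reduce it while preserving niceness — leading to a contradiction via an edge-counting bound on $e(V_1,V)$. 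Without this (or some equivalent) balancing device, your sketch does not close the $O(n)$ gap and does not constitute a proof of the exact bound.
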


\begin{proof}
\begin{figure}[h]
    \centering
    \includegraphics[scale=0.8]{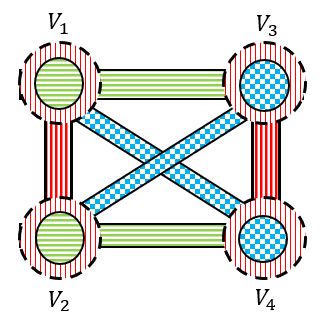}
    \caption{Initial construction for the $M(n,3)$ upper bound}
    \label{fig:3ub}
\end{figure}

We consider the following modification of a construction by Gyárfás: Let $V=V_1\cup V_2\cup V_3\cup V_4$ be a partition of the vertices of $G=K_n$ into four parts, with
\[
\left\lceil \frac{n}{4}\right \rceil = |V_1| \geq \cdots \geq |V_4| = \left\lfloor \frac{n}{4}\right \rfloor.
\]
Letting $E(U,W)$ denote the set of edges between vertex sets $U$ and $W$, color $E(V_1,V_2)\cup E(V_3,V_4)$ red, $E(V_1,V_3)\cup E(V_2,V_4)$ green, and $E(V_1,V_4)\cup E(V_2,V_3)$ blue. For large enough $n$, $e(V_i,V_j)\leq \lceil \frac{1}{6}\binom{n}{2}\rceil$ for $1\leq i<j\leq 4$, so it remains to extend this partial coloring by coloring the edges within each of the $V_i$ such that each monochromatic component has at most $\lceil \frac{1}{6}\binom{n}{2}\rceil$ edges at the end. It is natural to attempt the simple approach of distributing the edges within each $V_i$ as evenly as possible among the three colors. However, the possibility of a slight difference in size among the $V_i$ can yield a $\Theta(n)$ difference among the numbers of edges in each component if we are not sufficiently careful; a different coloring strategy will turn out to be simpler to analyze.

We call an extension of the above partial coloring \emph{nice} if each green or blue component contains exactly $\lceil\frac{1}{6}\binom{n}{2}\rceil$ edges. At least one nice coloring exists: we can color exactly $\lceil \frac{1}{6}\binom{n}{2}\rceil - e(V_1,V_3)$ of the edges within $V_1$ and $\lceil \frac{1}{6}\binom{n}{2}\rceil - e(V_2,V_4)$ of the edges within $V_2$ green, and exactly $\lceil \frac{1}{6}\binom{n}{2}\rceil - e(V_2,V_3)$ of the edges within $V_3$ and $\lceil \frac{1}{6}\binom{n}{2}\rceil - e(V_1,V_4)$ of the edges within $V_4$ blue (there are enough edges within each $V_i$ to do this when $n$ is sufficiently large), and color all remaining edges red. See Figure~\ref{fig:3ub} for a diagram of this coloring. Fix a nice coloring where the larger of the two red components contains as few edges as possible.

Suppose one of the red components in this coloring, without loss of generality the one on $V_1\cup V_2$, has more than $\lceil\frac{1}{6}\binom{n}{2}\rceil$ edges. Then $V_3\cup V_4$ must have less than $\lceil\frac{1}{6}\binom{n}{2}\rceil$ red edges. Without loss of generality let $V_1$ contain a red edge $e$. If either $V_3$ contains a green edge, or $V_4$ contains a blue edge, we can switch the color of that edge with edge $e$, preserving the sizes of the green and blue components while decreasing the size of the larger red component by one. Otherwise, $V_3$ is entirely blue and red, and $V_4$ is entirely green and red. Since the red component on $V_3\cup V_4$ has less than $\lceil\frac{1}{6}\binom{n}{2}\rceil$ edges, neither $V_3$ nor $V_4$ can be entirely red (for sufficiently large $n$, $\lfloor \frac{n}{4}\rfloor^2 + \binom{\lfloor \frac{n}{4}\rfloor}{2}>\lceil \frac{1}{6}\binom{n}{2}\rceil$). Then if $V_2$ contains a red edge, we can similarly switch two edges to reduce the size of the larger red component by one, so we can assume $V_2$ is entirely green and blue. But then there is one component of each color (including the larger red component) that does not have any edges within $V_2\cup V_3\cup V_4$, which means there are at least $3\lceil\frac{1}{6}\binom{n}{2}\rceil$ edges incident to $V_1$, a contradiction since
\[
e(V_1,V)\leq \binom{\lceil \frac{n}{4}\rceil}{2}+\left\lceil \frac{n}{4}\right\rceil\left(n-\left\lceil \frac{n}{4}\right\rceil\right) < 3\left\lceil\frac{1}{6}\binom{n}{2}\right\rceil,
\]
for sufficiently large $n$. Thus indeed there is a construction of this form where every monochromatic component has at most $\lceil\frac{1}{6} \binom{n}{2}\rceil$ edges.
\end{proof}
Combining Lemmas~\ref{lem:3lb} and~\ref{lem:3ub} yields Theorem~\ref{thm:color3} as desired.

The construction in the proof of Lemma~\ref{lem:3ub} is well-defined for all $n\geq 46$. A more careful analysis, splitting into cases based on the value of $n$ modulo $4$ and then using a more explicit construction in each case, shows that in fact $M(n,3)=\left\lceil\frac{1}{6}\binom{n}{2}\right\rceil$ for all $n\geq 11$ except $n=13,17$. However, the lower bound from Lemma~\ref{lem:3lb} is not sharp for some of these small values of $n$. Closely inspecting each step of our proof for $n=17$, for example, we can deduce that $M(17,3)=24$, instead of the expected $23$; indeed, all of the bounds before the final step in the proof of Lemma~\ref{lem:3lb} are loose enough to yield a component with at least $24$ edges unless we are in the case depicted in Figure~\ref{fig:3color}, where one of the vertex sets $A_i$ or $B_i$ has size $5$, and the other three sets have size $4$. The set of size $5$ then contains $10$ internal edges, some $4$ of which are in the same color, yielding a component with $4+20=24$ edges as claimed. This shows a genuine difference in the behavior of $M(n,3)$ for these small values of $n$ due to integer-related constraints in the extremal configurations.

We can adjust the proof of Lemma~\ref{lem:3lb} to give a lower bound on the size of the largest monochromatic circuit in a $3$-coloring of the edges of $K_n$, as follows. First, as before, we can remove a forest in each color and leave each color class Eulerian. The resulting graph $G$ has at least $\binom{n}{2}-3n$ edges, so all but at most $3\sqrt{n}$ of the vertices have degree at least $n-1-2\sqrt{n}$. We then pass to the induced subgraph $G'$ on these $n'\geq n-3\sqrt{n}$ vertices; the minimum degree of $G'$ is at least $n'-1-2\sqrt{n}\geq n'-3\sqrt{n'}$ when $n$ is sufficiently large. The argument then proceeds largely as in the proof of Lemma~\ref{lem:3lb}, except that the condition of every green or blue component intersecting both $V_1$ and $V_2$ is strengthened by requiring every green or blue component to intersect each of $V_1$ and $V_2$ in more than $6\sqrt{n'}$ vertices. The proof then concludes as before, reducing to the case where there are exactly two components in each color. When the edges between $V(G')$ and $V(G)\setminus V(G')$ are added back in, it remains true that there are at most two components in each color with a positive number of edges. So, at least one monochromatic component in $G$ has at least $\frac{1}{6}(|E(G)|-(3\sqrt{n})^2) \geq \frac{1}{12}n^2 - O(n)$ edges. Since this component of $G$ is Eulerian, we have a circuit, and hence a trail, of the desired length, for all sufficiently large $n$. The upper bound from Lemma~\ref{lem:3ub} likewise applies to the size of the longest circuit, showing that the lower bound is asymptotically tight.

\vspace{3mm}

\noindent {\bf Acknowledgments.} I would like to thank my advisor Jacob Fox for introducing me to this problem and for helpful conversations along the way, as well as David Conlon and Mykhaylo Tyomkyn for the insights from our later joint work that served as inspiration for some of the strengthened arguments in the revised version of this paper. In addition, I would like to thank the anonymous referees for their careful reading and helpful comments, including a specific suggestion that led to a significant strengthening in the bound for general $k$ in Theorem~\ref{thm:colorlb}.

\bibliographystyle{acm}
\bibliography{main}

\end{document}